\numberwithin{equation}{section}
\newtheorem{theorem}{Theorem}[section]
\newtheorem{lemma}[theorem]{Lemma}
\theoremstyle{definition}
\date{}
\theoremstyle{remark}
\newtheorem{remark}[theorem]{Remark}
\begin{document}

\allowdisplaybreaks
\title{\textbf{Global bounded solution of the chemotaxis attraction repulsion Cauchy problem with the nonlinear signal production in $\mathbb{R}^{N}$}}

\author{Tae Gab Ha$^1$ and Seyun Kim$^2$}

\maketitle

\centerline{Department of Mathematics, and Institute of Pure and Applied Mathematics,}
\centerline{Jeonbuk National University, Jeonju 54896, Republic of Korea}

\renewcommand{\thefootnote}{}
\footnote{Email: $^1$tgha@jbnu.ac.kr, $^2$dccddc8097@naver.com}
\footnote{2020 Mathematics Subject Classification: 35A01, 35B35, 35K35, 35Q92, 92C17}
\footnote{Keywords: chemotaxis; global boundedness of solutions; nonlinear signal production}

    \begin{abstract}
        In this paper, we consider the following attraction repulsion chemotaxis model with nonlinear signal term:
        \begin{align*}
            &u_{t}=\nabla \cdot(\nabla u-\xi_{1} u \nabla v +\xi_{2} u \nabla w), &\quad x \in \mathbb{R}^{N}, t>0, \\
            &0=\Delta v -\lambda_{1}v +f_{1}(u), \quad &\quad x \in \mathbb{R}^{N}, t>0, \\
            &0=\Delta w -\lambda_{2}w +f_{2}(u), \quad &\quad x \in \mathbb{R}^{N}, t>0,
        \end{align*}
where $\xi_{1},\xi_{2},\lambda_{1},\lambda_{2}$ are for some positive constants, and
\begin{equation*}
    f_{1} \in C^{1}([0,\infty)) \; \text{satisfying} \; 0 \leqslant f_{1}(s) \leqslant c_{1}s^{l},  \; \forall s \geqslant 0 \ \text{and} \ l> 0,
\end{equation*}
\begin{equation*}
    f_{2} \in C^{1}([0,\infty))\; \text{satisfying} \; 0 \leqslant f_{2}(s) \leqslant c_{2}s^{m},  \; \forall s \geqslant 0 \ \text{and} \ m> 0.
\end{equation*}
 We will show that this problem has a unique global bounded solution  when $ l>\frac{2}{N}, l<m \ \text{with} \ m \geqslant 1$, or $l=m<\frac{2}{N}$.
    \end{abstract}
\section{Introduction}

The classical chemotaxis system
 \begin{align*}
    &u_{t}=\nabla \cdot(\nabla u-\xi( u,v) \nabla v ), &\quad x \in \Omega, t>0,   \\
    &\tau v_{t}=\Delta v -\lambda v +f(u), \quad &\quad x \in \Omega, t>0
\end{align*}
was proposed by Keller and Segel \cite{keller} in 1970  with $f(u)=u$ to describe the aggregation of cellular slime mold.
In this model,\ $u=u(x,t)$ denotes the cell density and $v=v(x,t)$ denotes the concentration of the chemoattractant. The function $\xi(u,v)$ represents the sensitivity with respect to chemotaxis, and the function $f(u)$ models the growth of the chemoattractant.

In recent decades, significant progress has been made in analyzing various cases of the chemotaxis system both on bounded and unbounded domains. For example, when $f(u)=\alpha u$ , $\tau=0$, $\xi(u,v)=u$ and $\Omega=\mathbb{R}^{N}$, with $N=2,$ if the initial data $u_{0}$ satisfies that $\alpha  \int_{\mathbb{R}^{2}} u_{0} dx <8\pi$, then the solution exists globally. In the contrary if $\alpha  \int_{\mathbb{R}^{2}} u_{0} dx >8\pi$, then the solution blows up in finite time. Moreover when $N\geqslant3$, if the moment $\int_{\mathbb{R}^{N}} u_{0}(x)\vert x-q\vert^{2} dx$ is sufficiently small then the solution blows up in finite time (\cite{nagai2,nagai3}). On the other hand, when $\lambda=1,f(u)=u,\xi(u,v)=u$ and $N\geqslant2$, it has a global weak solution, and has polynomial decay property for sufficiently small initial data (\cite{suki2}).
When $\Omega \subset \mathbb{R}^{N}$ with smooth bounded domain and homogeneous Neumann  boundary condition, a similar situation occurs, see \cite{nagai9,nagai10} and a list on references therein.

Above mentioned references considered the existence of solutions which are globally defined in time or blow up at a finite time and the asymptotic behavior of the global solutions. In addition to these studies, the global boundedness problem is also being studied  in various ways. For instance, when $f(u)=\alpha u$ , $\tau=1$, $\xi(u,v)=u,\lambda=1$ and $\Omega=\mathbb{R}^{N}$ with $N\geqslant2 $, there is a unique bounded classical solution which decays to zero as $t\rightarrow \infty$ and behaves like the heat kernel (\cite{nagai5,nagai6}). On the other hand, when $\Omega \subset \mathbb{R}^{2}$ is a smooth bounded domain the solution blows up in finite time for some large initial data (\cite{blow2}), while for small initial data satisfying $\int_{\Omega} u_{0} dx <\frac{8\pi}{\alpha}$, the solution exists globally (\cite{nagai11}). When $N\geqslant 3$, the solution of the classical chemotaxis system may occur the blow-up phenomenon (\cite{winkler3}).

Above mentioned references considered the linear production term. However, the problem considering nonlinear production term is much more complicated. When $0\leqslant f(u) \leqslant cu^{k}$ with $0<k<\frac{2}{N}$, for any higher dimensions without any small initial data assumption the global solution is bounded if $\Omega$ is a bounded domain (\cite{dliu2}). If the second equation of the classical chemotaxis system is replaced by $0=\Delta v- \frac{1}{\vert \Omega \vert}\int_{\Omega}f(u(x,t))dx+f(u)$ with $f(u)\geqslant u^{k}$ for $u\geqslant 1$ and $k>\frac{2}{N}$, Winkler \cite{winkler2} show that the radially symmetric solution blows up in finite time. So $k=\frac{2}{N}$ is the critical number in the nonlinear production point of view.  But when the sensitive term $ \xi(u,v)=\frac{u}{v}$, Liu \cite{dliu1} proved that the solution exists globally without any assumption of small initial data and restriction of high dimensions for $0\leqslant f(u) \leqslant cu^{k}$, $0<k<\frac{2}{N}+2$ if $\Omega$ is a bounded domain. Moreover, Frassu and Viglialoro \cite{gis2} proved that there exists global bounded solution when $\xi(u)\leqslant c_{1}u^{\alpha}, 0\leqslant f(u) \leqslant c_{2}u^{l} , \frac{2}{N} \leqslant \alpha < 1+\frac{1}{N}-\frac{l}{2}$ and $l<\frac{2}{N}$. Here the signal production $f$ is subcritical $l<\frac{2}{N}$. We know that the signal production term of the classical chemotaxis model, $f(u)=u$, is critical when $N=2$. But as far as we know that for the critical case ($l=\frac{2}{N}$) of the general chemotaxis models, the global existence or blow-up of the solutions is still open problem both when the domain $\Omega$ is bounded or unbounded.

Luca et al. \cite{luca} proposed the following model:
\begin{align*}
    &u_{t}=\nabla \cdot(\nabla u-\xi_{1} u \nabla v +\xi_{2} u \nabla w), &\quad x \in \Omega, t>0, \nonumber  \\
    &\tau_{1}v_{t}=\Delta v -\lambda_{1}v +f_{1}(u), \quad &\quad x \in \Omega, t>0, \\
    &\tau_{2}w_{t}=\Delta w -\lambda_{2}w +f_{2}(u), \quad &\quad x \in \Omega, t>0,  \nonumber
\end{align*}
where $u(x,t)$ denotes the density of cells, $v(x,t)$ denotes the chemical concentration of attractant and $w(x,t)$ denotes the chemical concentration of repellent. When $\Omega=\mathbb{R}^{N},f_{1}(u)=f_{2}(u)=u,\tau_{1}=\tau_{2}=0$, for the repulsion dominant case, i.e. $\xi_{1}<\xi_{2}$, the solution is global bounded and decays to zero polynomial speed as $t \rightarrow \infty$ without any assumption of size of the initial data, on the other hand, for the attraction dominated case, i.e. $\xi_{1}>\xi_{2} $, the solution blows up in finite time when $\Vert u_{0} \Vert_{1} >\frac{8\pi}{\xi_{1}-\xi_{2}}$ and $\int_{\mathbb{R}^{2}}u_{0}\vert x-q\vert ^{2} dx$ is sufficiently small (\cite{regular2}). When $\Omega=\mathbb{R}^{N},f_{1}(u)=c_{1}u,f_{2}(u)=c_{2}u,\tau_{1}=\tau_{2}=1,$ and $\xi_{1}\lambda_{1}=\xi_{2}\lambda_{2}$, the solution is global bounded and polynomially decays to zero as $t\rightarrow \infty$ (\cite{jin}). When $N=2$ , $\xi_{1}\geqslant \xi_{2}$ and $\tau_{1}=\tau_{2}=0$, If $\int_{\mathbb{R}^{2}} u_{0} dx <\frac{8\pi}{\xi_{1}-\xi_{2}}$, Nagai and Yamada showed that solution exists globally (\cite{nagai7}). They estimated the entropy $\Vert (1+u)\log(1+u)\Vert_{1}\leqslant C(T)$ and then proved $\Vert u \Vert_{\infty}<C(T)$, where $C(T)$ is finite. For $\tau_{1}=1,\tau_{2}=0$ or $\tau_{1}=\tau_{2}=1$, global boundedness of the solution was obtained by using the technique of \cite{nagai7} (see \cite{shi2,nagai1}). On the other hand, when $\Omega \subset \mathbb{R}^{N}$ is a smooth bounded domain with homogeneous Neumann  boundary condition, if the second and third equations of the model of Luca et al. \cite{luca} above are replaced by $0=\Delta v -\frac{1}{\vert \Omega \vert}\int_{\Omega}f_{1}(u)dx+f_{1}(u)$ and $0=\Delta w -\frac{1}{\vert \Omega \vert}\int_{\Omega}f_{2}(u) dx+f_{2}(u)$, respectively, the solution blows up in finite time when $f_{1}(u) \geqslant c_{1}u^{k_{1}}, f_{2}(u) \leqslant c_{2}u^{k_{2}} \ \forall u\geqslant 1$ and $k_{1}>k_{2}, k_{1}>\frac{2}{N}$ (\cite{blow3}). Recently, Columbu et al. proved that if $0 \leqslant f_{1}(u) \leqslant c_{1}u^{k_{1}}, c_{2}(1+u)^{k_{2}} \leqslant  f_{2}(u) \leqslant c_{2}(1+u)^{k_{2}}$, then the solution exists globally when
$k_{2}>k_{1},or \ 0 <k,l<\frac{2}{N}$. They also studied boundedness criteria when $\tau_{1}=\tau_{2}=1$, and the second and third equations of the model of Luca et al. \cite{luca} above are replaced by  $0=\Delta v -\frac{1}{\vert \Omega \vert}\int_{\Omega}f_{1}(u)dx+f_{1}(u)$ and $0=\Delta w -\frac{1}{\vert \Omega \vert}\int_{\Omega}f_{2}(u) dx+f_{2}(u)$, respectively (\cite{gis}). Also Viglialoro studied the same problem on bounded domain of attraction-repulsion nonlinear signal case (\cite{gis3}). But for $\Omega=\mathbb{R}^{N}$, it is difficult to assume that  $c_{2}(1+u)^{k_{2}} \leqslant  f_{2}(u) \leqslant c_{2}(1+u)^{k_{2}}$, furthermore there is none, as far as we know, for the nonlinear signal case.

Motivated above mentioned references, we will study the global bounded solution of the following attraction repulsion problem with the nonlinear signal production in $\mathbb{R}^n$:
\begin{align}\label{0}
    &u_{t}=\nabla \cdot(\nabla u-\xi_{1} u \nabla v +\xi_{2} u \nabla w), &\quad x \in \mathbb{R}^{N}, t>0, \nonumber  \\
    &0=\Delta v -\lambda_{1}v +f_{1}(u), \quad &\quad x \in \mathbb{R}^{N}, t>0, \\
    &0=\Delta w -\lambda_{2}w +f_{2}(u), \quad &\quad x \in \mathbb{R}^{N}, t>0.  \nonumber
\end{align}

where $\xi_{1},\xi_{2},\lambda_{1},\lambda_{2}$ are positive constants.
We also assume that
\begin{equation*}
    f_{1} \in C^{1}([0,\infty)) \; \text{satisfying} \; 0 \leqslant f_{1}(s) \leqslant c_{1}s^{l},  \;   l> 0 \;  \text{and} \quad \forall s \geqslant 0.
\end{equation*}
\begin{equation*}
    f_{2} \in C^{1}([0,\infty))\; \text{satisfying} \; 0 \leqslant f_{2}(s) \leqslant c_{2}s^{m},  \;   m> 0 \;  \text{and} \quad \forall s \geqslant 0.
\end{equation*}
 We can extend $f_{i}(s)$ to the negative axis by defining $f_{i}(s)=sf_{i}'(0)$ for $s<0,i=1,2$ as in \cite{winkler1}.
 This implies $f_{i} \in C^{1}(\mathbb{R}).$\\
 \begin{theorem}\label{thm}
        For $N \geqslant 2 $, assume that $0\leqslant f_{1}(s) \leqslant c_{1}s^{l},f_{2}(s)=s^{m}$, where $l > \frac{2}{N},m \geqslant 1$ and $l <m$. Then the solution of the problem (1.1) has a global bounded solution such that
        \begin{align*}
            &\Vert u \Vert_{p} \leqslant C_{1} \quad \text{for} \ 1\leqslant p \leqslant \infty, \\
            &\Vert v \Vert_{p} \leqslant C_{2} \quad \text{for} \ \frac{1}{l} \leqslant p \leqslant \infty ,\ \text{when} \ l<1, \\
            &\Vert v \Vert_{p} \leqslant C_{3} \quad \text{for} \ 1\leqslant p \leqslant \infty ,\ \text{when} \ l\geqslant 1,\\
            &\Vert w \Vert_{p} \leqslant C_{4} \quad \text{for} \ 1\leqslant p \leqslant \infty.
        \end{align*}
    In addition, if $0\leqslant f_{1}(s) \leqslant c_{1}s^{l},0\leqslant f_{2}(s) \leqslant c_{2}s^{m}\ \text{for} \ l=m <\frac{2}{N}$, then it holds that
    \begin{align*}
        &\Vert u \Vert_{p}\leqslant C_{5} \quad \text{for} \ 1\leqslant p \leqslant \infty, \\
        &\Vert (v,w) \Vert_{p} \leqslant C_{6} \quad \text{for} \ \frac{1}{l}=\frac{1}{m} \leqslant p \leqslant \infty.
    \end{align*}
    Positive constants \ $C_{1},C_{2},C_{3},C_{4},C_{5},C_{6}$ depend on the $l,m,\lambda_{1},\lambda_{2},\Vert u_{0} \Vert_{1},\\ \Vert u_{0} \Vert_{\infty},c_{1},c_{2},\xi_{1},\xi_{2},N$.
\end{theorem}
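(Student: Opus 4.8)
The plan is to establish uniform-in-time $L^p$ bounds for $u$ and then bootstrap to $L^\infty$, with the attraction--repulsion structure entering through a single differential inequality. First I would record the preliminaries: local well-posedness with an extensibility criterion (the solution is global once $\|u(\cdot,t)\|_\infty$ stays finite on bounded time intervals), nonnegativity of $u$ by the parabolic maximum principle and hence of $v,w$ by the elliptic maximum principle (using $f_i\ge 0$), and conservation of mass $\|u(\cdot,t)\|_1=\|u_0\|_1$ from integrating the first equation over $\mathbb{R}^N$. I would also set down elliptic regularity for the stationary equations: testing $-\Delta v+\lambda_1 v=f_1(u)$ with $|v|^{q-2}v$ and discarding the nonnegative Dirichlet term gives the sharp bound $\|v\|_q\le \lambda_1^{-1}\|f_1(u)\|_q\le c_1\lambda_1^{-1}\|u\|_{lq}^{l}$, and likewise $\|w\|_q\le \lambda_2^{-1}\|u\|_{mq}^{m}$, with Sobolev embedding upgrading these to $L^\infty$ and gradient estimates once the exponent of $u$ exceeds $N/2$.

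The core is the $L^p$ estimate. Testing the $u$-equation with $p u^{p-1}$, integrating by parts twice, and substituting $\Delta v=\lambda_1 v-f_1(u)$ and $\Delta w=\lambda_2 w-f_2(u)$ yields
\begin{align*}
\frac{d}{dt}\|u\|_p^p+\frac{4(p-1)}{p}\|\nabla u^{p/2}\|_2^2
&=-(p-1)\xi_1\lambda_1\!\int u^p v+(p-1)\xi_1\!\int u^p f_1(u)\\
&\quad+(p-1)\xi_2\lambda_2\!\int u^p w-(p-1)\xi_2\!\int u^p f_2(u).
\end{align*}
The terms $-(p-1)\xi_1\lambda_1\int u^p v$ and $-(p-1)\xi_2\int u^p f_2(u)$ are nonpositive and embody the stabilizing effect of repulsion; the attraction produces the dangerous term $(p-1)\xi_1\int u^p f_1(u)\le c_1(p-1)\xi_1\int u^{p+l}$, while the cross term $(p-1)\xi_2\lambda_2\int u^p w$ must be reabsorbed. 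For the latter I would apply H\"older with conjugate exponents $\frac{p+m}{p},\frac{p+m}{m}$ together with the sharp elliptic bound to get $\int u^p w\le \lambda_2^{-1}\|u\|_{p+m}^{p+m}$, so that $(p-1)\xi_2\lambda_2\int u^p w$ is controlled by $(p-1)\xi_2\int u^{p+m}$ and balanced against the good term $-(p-1)\xi_2\int u^{p+m}$ coming from $f_2(u)=u^m$.

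In the subcritical case $l=m<\frac{2}{N}$ this already suffices: once the cross term is absorbed the net repulsion contribution is nonpositive, and the attraction term $\int u^{p+l}$ with $l<\frac{2}{N}$ is handled by Gagliardo--Nirenberg applied to $\phi=u^{p/2}$ with the low-order factor $\|\phi\|_{2/p}^{\,2/p}=\|u\|_1$ fixed by mass conservation; the resulting power of $\|\nabla u^{p/2}\|_2$ is strictly below $2$, so Young's inequality absorbs it into the dissipation and closes an inequality of the form $\frac{d}{dt}\|u\|_p^p\le -c\|u\|_p^p+C$, giving the uniform $L^p$ bound for every finite $p$. In the case $l>\frac{2}{N}$, $l<m$, $m\ge1$ the attraction exponent is supercritical and cannot be absorbed by the dissipation alone, so here I would genuinely exploit repulsion domination: using $l<m$ and Young, $\int u^{p+l}\le \varepsilon\int u^{p+m}+C_\varepsilon\int u^{p}$, and I would organize the cross-term estimate so that a strictly negative multiple of $\int u^{p+m}$ survives. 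This is where the sharp constant $\lambda_2^{-1}$ and a staged bootstrap that first promotes $\|u\|_s$ to some $s>\frac{mN}{2}$ (whence $\|w\|_\infty\le C$, turning the cross term into the lower-order quantity $\|w\|_\infty\|u\|_p^p$) are used; with a net negative $\int u^{p+m}$ term in hand, Young against the attraction followed by Gagliardo--Nirenberg on the residual $\int u^p$ closes the inequality as before.

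Finally, from the uniform $L^p$ bounds I would pass to $L^\infty$ by a Moser-type iteration, or equivalently via the representation $u(t)=e^{t\Delta}u_0-\int_0^t e^{(t-s)\Delta}\nabla\cdot\big(u(\xi_1\nabla v-\xi_2\nabla w)\big)\,ds$ together with $L^q$--$L^\infty$ heat-semigroup smoothing, controlling the drift through the gradient estimates for $v,w$ furnished by the elliptic lemmas from the $L^p$ bounds on $u$. Feeding $\|u\|_\infty\le C_1$ back into the elliptic estimates then produces the asserted ranges for $\|v\|_p$ and $\|w\|_p$. I expect the main obstacle to be precisely the supercritical regime $l>\frac{2}{N}$: keeping a strictly negative coefficient in front of $\int u^{p+m}$ after the cross term $\int u^p w$ has been reabsorbed is delicate, since the elementary H\"older bound is exactly tight, and it is here that the interplay of the sharp elliptic constant, the hypothesis $m\ge1$, and the order of the bootstrap must be managed with care.
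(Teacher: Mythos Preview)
Your overall architecture matches the paper's: local existence with a blow-up alternative, mass conservation, the $L^p$ energy identity obtained by testing with $u^{p-1}$ and substituting $\Delta v=\lambda_1 v-f_1(u)$, $\Delta w=\lambda_2 w-f_2(u)$, and then a semigroup (or Moser) step from uniform $L^p$ to $L^\infty$. The subcritical case $l=m<\tfrac{2}{N}$ is handled essentially as you describe.

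The genuine gap is in the supercritical case $l>\tfrac{2}{N}$, precisely at the point you yourself flag as delicate. Your control of the cross term uses the elliptic bound $\|w\|_q\le\lambda_2^{-1}\|u^m\|_q$ (no smoothing), which via H\"older gives $\int u^p w\le \lambda_2^{-1}\|u\|_{p+m}^{p+m}$. The coefficient this produces, $(p-1)\xi_2\lambda_2\cdot\lambda_2^{-1}=(p-1)\xi_2$, cancels the good term $-(p-1)\xi_2\int u^{p+m}$ \emph{exactly}, leaving no negative multiple of $\int u^{p+m}$ to absorb the supercritical attraction $\int u^{p+l}$. Your proposed remedy, a staged bootstrap that ``first promotes $\|u\|_s$ to some $s>\tfrac{mN}{2}$'', has no valid first step: with zero slack in the $\int u^{p+m}$ balance and $l>\tfrac{2}{N}$, the inequality $\frac{d}{dt}\|u\|_p^p+\text{dissipation}\le C\int u^{p+l}$ does not close for any $p>1$ starting from $L^1$ alone, so you never reach $s>\tfrac{mN}{2}$.

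The paper breaks this exact cancellation by exploiting the \emph{smoothing} of $(\lambda_2 I-\Delta)^{-1}$ rather than its sharp $L^q\to L^q$ constant. Writing $w=\Gamma_{\lambda_2}\ast u^m$ with the Bessel potential $\Gamma_{\lambda_2}\in L^{N/(N-1)}(\mathbb{R}^N)$, Young's convolution inequality yields $\|w\|_{(p+m)/m}\le \|\Gamma_{\lambda_2}\|_{N/(N-1)}\,\|u^m\|_{q}$ with $q=\tfrac{N(p+m)}{mN+m+p}<\tfrac{p+m}{m}$, i.e.\ $\|u\|_{mq}$ with $mq<p+m$. Interpolating $\|u\|_{mq}$ between $\|u_0\|_1$ and $\|u\|_{p+m}$ then gives a power of $\|u\|_{p+m}$ strictly less than $p+m$, so after Young's inequality the cross term contributes only $\varepsilon\|u\|_{p+m}^{p+m}+C_\varepsilon$ with $\varepsilon$ at your disposal. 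This is what creates the net negative coefficient in front of $\int u^{p+m}$ that your argument needs but does not obtain. Replace your sharp-constant elliptic bound by this Bessel-potential estimate (for $p>\max\{2,\tfrac{m}{N-1}\}$, which guarantees $q\ge1$) and the rest of your scheme goes through as written.
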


We will prove the existence of the mild solution by using the Banach fixed point method and the semigroup property, and then prove the H\"older regularity of the mild solution. Next we do linearization of the parabolic equation, and then we get the classical solution applying the methods developed in \cite{regular1} and \cite{regular3} (Lemma \ref{lem6}, \ref{lem7}, \ref{lem8}). In Lemma \ref{lem9} and \ref{lem10}, we estimate the boundedness of the solution $u, v, w$, and then by using the semigroup property, we obtain the global uniformly bounded solution.

\section{Preliminary}

Let $X=L^{p}(\mathbb{R}^{N})$ with $p \geqslant 1$ or $X=BUC(\mathbb{R}^{N})$, where $BUC(\mathbb{R}^{N})$ is a bounded uniformly continuous Banach space with the norm $\Vert \Vert_{\infty}$. $A=-\Delta +\lambda I$ is the Sectorial operator generated by the analytic semigroup $T(t)_{t\geqslant 0}$ with $Re(\sigma(A))>0, \lambda>0$ in $X$ (see \cite{hen,pazy}).
This induces the inverse operator
\begin{equation*}
    (\lambda I -\Delta)^{-1} u(x) =\int_{0}^{\infty} e^{-\lambda s} G(x,s)\ast u(x) ds,
\end{equation*}
where $ u \in L^{1}(\mathbb{R}^{N})$ or $ \in BUC(\mathbb{R}^{N})$, and $G(x,t)=(4 \pi t)^{-\frac{N}{2}}e^{-\frac{\vert x \vert^{2}}{4t}}$. $ \Gamma_{\lambda}:=\int_{0}^{\infty} e^{-\lambda s} G(x,s) ds$ is called the
Bessel potential (see \cite{hen,pazy,stein}). Let $C^{v}(\mathbb{R}^{N})$ be the H\"older space with norm
\begin{equation*}
    \Vert u \Vert_{C^{v}}:= \Vert u \Vert_{\infty}+ \sup_{x,y \in \mathbb{R}^{N} \ \text{and} \ x \ne y} \frac{u(x)-u(y)}{\vert x-y \vert ^{v}} .
\end{equation*}
We introduce the parabolic fundamental solutions (cf. \cite[Ch IV]{rus}, \cite[Ch 1]{fri}). Consider the following parabolic equation:
\begin{equation}\label{1}
 Lu\equiv \sum_{i,j}^{N}a_{ij}(x,t)\frac{\partial^{2} u}{\partial x_i x_j}+\sum_{i}^{N}b_{i}(x,t)\frac{\partial u}{\partial x_i}+c(x,t)u-\frac{\partial u}{\partial t}=0, \; (x,t) \in \mathbb{R}^{N}\times (0,\infty).
\end{equation}
If $b_{i}(x,t),c(x,t)=0$ ,then it is well known that there is a fundamental solution the following form:
\begin{align*}
    Z_{0}(x-\xi,\xi,&t,\tau) \\ = & \frac{1}{\left[4\pi(t-\tau)\right]^{\frac{N}{2}}(detA(\xi,\tau))^{\frac{1}{2}}} e^{\left(-\frac{1}{4(t-\tau)}\sum_{i,j}^{N}A^{ij}(\xi,\tau)(x_{i}-\xi_{i})(x_{j}-\xi_{j})\right)},
\end{align*}
where $A_{ij}$ is the matrix composed of the leading coefficients $a_{i,j}$ of the (\ref{1}), while the $A^{i,j}$ are the elements of the inverse matrix of $A_{ij}$. For $t<\tau$ we set $Z_{0}=0$. If $b_{i},c \ne 0$, then we construct fundamental solution $Z$ the following form:
\begin{equation}\label{2}
    Z(x,\xi,t,\tau)=Z_{0}(x-\xi,\xi,t,\tau)+\int_{\tau}^{t} d\lambda \int_{\mathbb{R}^{N}}Z_{0}(x-y,y,t,\lambda)\Phi(y,\xi,\lambda,\tau)dy,
\end{equation}
where $\Phi$ is the solution of the Volterra integral equation (\cite[p14]{fri}). There is an useful lemma for our paper.
\begin{lemma}[\cite{rus} p376-377, \cite{fri} p23]\label{lem1}
    Assume that $L$ is uniformly parabolic in $\mathbb{R}^{N} \times [0,T]$ and $a_{i,j},b_{i},c \in C^{\alpha,\frac{\alpha}{2}}(\mathbb{R}^{N}\times [0,T])$ with $ 0< \alpha <1$.
    Then the problem (\ref{1}) has the fundamental solution (\ref{2}). In addition the fundamental solution $Z$ has following estimates
    \begin{equation*}
        \left\vert D_{t}^{r}D_{x}^{s}Z(x,\xi,t,\tau)\right\vert \leqslant c(t-\tau)^{-\frac{N+2r+s}{2}}exp\left(-C\frac{\left\vert x-\xi\right\vert ^{2}}{t-\tau}\right),
    \end{equation*}
    where $2r+s\leqslant2,t>t'>\tau$,
    \begin{align*}
        &\left\vert D_{t}^{r}D_{x}^{s}Z(x,\xi,t,\tau)-D_{t}^{r}D_{x'}^{s}Z(x',\xi,t,\tau)\right\vert \\ \leqslant &c\left[\vert x-x'\vert^{\gamma}(t-\tau)^{-\frac{N+2+\gamma}{2}}+\vert x-x'\vert^{\beta}(t-\tau)^{-\frac{N+2-\alpha+\beta}{2}}\right] exp\left(-C\frac{\left\vert x''-\xi\right\vert ^{2}}{t-\tau}\right),\nonumber
    \end{align*}
    where $2r+s=2$,$ 0\leqslant \gamma \leqslant 1, 0\leqslant \beta \leqslant \alpha, t>\tau,$ and
    \begin{align*}
        &\left\vert D_{t}^{r}D_{x}^{s}Z(x,\xi,t,\tau)-D_{t}^{r}D_{x}^{s}Z(x,\xi,t',\tau)\right\vert \\ \leqslant &c\left[(t-t')(t'-\tau)^{-\frac{N+2r+s+2}{2}}+(t-t')^{\frac{2-2r-s+\alpha}{2}}(t'-\tau)^{-\frac{N+2}{2}}\right] exp\left(-C\frac{\left\vert x-\xi\right\vert ^{2}}{t-\tau}\right),\nonumber
    \end{align*}
    where $2r+s=1,2$ and $t>t'>\tau$.
\end{lemma}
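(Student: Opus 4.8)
The plan is to construct $Z$ by E.~E.~Levi's parametrix method and to read off the three families of estimates from Gaussian bounds for the frozen-coefficient kernel $Z_0$ together with bounds on the density $\Phi$ that the construction produces. Throughout, $L$ acts on the running variables $(x,t)$ while the pole $(\xi,\tau)$ is fixed, and uniform parabolicity guarantees that the frozen leading-coefficient matrix $A(\xi,\tau)$ is uniformly positive definite, so that $Z_0$ is a genuine anisotropic Gaussian.

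First I would record that $Z_0(x-\xi,\xi,t,\tau)$, being by definition the fundamental solution of the constant-coefficient operator $L_\xi$ obtained by freezing the leading coefficients of $L$ at $(\xi,\tau)$, satisfies $L_\xi Z_0=0$ for $t>\tau$ and, by direct differentiation of the explicit Gaussian,
\begin{equation*}
    \left\vert D_t^r D_x^s Z_0(x-\xi,\xi,t,\tau)\right\vert \leqslant c(t-\tau)^{-\frac{N+2r+s}{2}}\exp\left(-C\frac{\vert x-\xi\vert^2}{t-\tau}\right),\quad 2r+s\leqslant 2,
\end{equation*}
the Hölder-in-$x$ and Hölder-in-$t$ difference bounds following for $Z_0$ from the mean value theorem and the elementary inequality $\vert x-\xi\vert^\theta\exp(-C\vert x-\xi\vert^2/(t-\tau))\leqslant c(t-\tau)^{\theta/2}\exp(-C'\vert x-\xi\vert^2/(t-\tau))$. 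The key structural input is the Hölder continuity of the coefficients: writing $LZ_0=(L-L_\xi)Z_0$ (since $L_\xi Z_0=0$) and using $\vert a_{ij}(x,t)-a_{ij}(\xi,\tau)\vert\leqslant c(\vert x-\xi\vert^\alpha+\vert t-\tau\vert^{\alpha/2})$ against the bound above with $2r+s=2$, the singular power improves from $-(N+2)/2$ to
\begin{equation*}
    \left\vert LZ_0(x-\xi,\xi,t,\tau)\right\vert \leqslant c(t-\tau)^{-\frac{N+2-\alpha}{2}}\exp\left(-C\frac{\vert x-\xi\vert^2}{t-\tau}\right),
\end{equation*}
an integrable singularity that is precisely what drives the iteration.

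I then define $\Phi=\sum_{k\geqslant1}K_k$ with $K_1=LZ_0$ and $K_{k+1}(x,\xi,t,\tau)=\int_\tau^t d\lambda\int_{\mathbb{R}^N}LZ_0(x-y,y,t,\lambda)K_k(y,\xi,\lambda,\tau)\,dy$; this is the Neumann solution of the Volterra equation with kernel $LZ_0$, and (\ref{2}) is arranged so that the resulting $Z$ satisfies $LZ=0$. The main technical step, where I expect the real work to lie, is the quantitative convergence of this series. The engine is the Chapman--Kolmogorov type estimate
\begin{equation*}
    \int_{\mathbb{R}^N}(t-\lambda)^{-a}e^{-C\frac{\vert x-y\vert^2}{t-\lambda}}(\lambda-\tau)^{-b}e^{-C\frac{\vert y-\xi\vert^2}{\lambda-\tau}}\,dy\leqslant c\,(t-\lambda)^{\frac{N}{2}-a}(\lambda-\tau)^{\frac{N}{2}-b}(t-\tau)^{-\frac{N}{2}}e^{-C'\frac{\vert x-\xi\vert^2}{t-\tau}},
\end{equation*}
followed by integration in $\lambda$ through the Beta integral $\int_\tau^t(t-\lambda)^{-p}(\lambda-\tau)^{-q}\,d\lambda=B(1-p,1-q)(t-\tau)^{1-p-q}$. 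The first iteration, with $a=b=(N+2-\alpha)/2$, gains a factor $(t-\tau)^{\alpha/2}$, and each subsequent step lowers the singular exponent by a further $\alpha/2$; since the accumulated Beta constants behave like $\Gamma(\tfrac{\alpha}{2})^k/\Gamma(\tfrac{k\alpha}{2})$, the factorially growing denominator forces uniform convergence and yields $\vert\Phi\vert\leqslant c(t-\tau)^{-(N+2-\alpha)/2}\exp(-C\vert x-\xi\vert^2/(t-\tau))$, with the same Hölder-type difference bounds inherited from those of $LZ_0$.

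Finally I would substitute this $\Phi$ into (\ref{2}), differentiate under the integral sign (legitimate for $2r+s\leqslant2$ after splitting the $\lambda$-integral near $\lambda=t$ so that a derivative is absorbed onto the regular factor $Z_0(x-y,y,t,\lambda)$), and apply the Gaussian bounds for $Z_0$ and the bound for $\Phi$ once more; the Chapman--Kolmogorov estimate then reproduces exactly the stated pointwise bound on $D_t^rD_x^sZ$. The Hölder-in-$x$ estimate for $2r+s=2$ comes from combining the difference bound for $D_x^sZ_0$ with the convergent $\Phi$-integral, and the $t>t'>\tau$ estimate additionally uses the mean value theorem in $t$ on $Z_0$ together with the interpolation balancing factor $(t-t')^{(2-2r-s+\alpha)/2}$ that trades regularity against the competing singular scale; this last bookkeeping near the diagonal $\lambda\to t$ is the most delicate part of the argument.
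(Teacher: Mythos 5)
Your route is exactly the one behind the paper's citation: the paper offers no proof of this lemma at all (it is quoted verbatim from Friedman, Ch.~1, and Ladyzhenskaya--Solonnikov--Ural'tseva, Ch.~IV), and those sources prove it precisely by the Levi parametrix scheme you describe --- Gaussian bounds for $Z_{0}$, the gain $\vert LZ_{0}\vert \leqslant c(t-\tau)^{-\frac{N+2-\alpha}{2}}\exp(-C\vert x-\xi\vert^{2}/(t-\tau))$ coming from the H\"older continuity of the coefficients, the Neumann series for $\Phi$ with the Chapman--Kolmogorov convolution estimate, the Beta integrals, and the $\Gamma(\frac{\alpha}{2})^{k}/\Gamma(\frac{k\alpha}{2})$ factorial decay forcing convergence. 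All of that is a faithful reconstruction of the intended argument.

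There is, however, one step that fails as you state it: the differentiation of the volume potential when $2r+s=2$. Your parenthetical fix --- split the $\lambda$-integral near $\lambda=t$ so that ``a derivative is absorbed onto the regular factor $Z_{0}$'' --- cannot work, because the $x$-dependence sits only in $Z_{0}(x-y,y,t,\lambda)$, which near $\lambda=t$ is the \emph{singular} factor, and $\Phi$ is merely H\"older so no derivative can be moved onto it. Indeed, by your own convolution estimate with $a=\frac{N+2}{2}$ (from $D_{x}^{2}Z_{0}$) and $b=\frac{N+2-\alpha}{2}$ (from $\Phi$), the $\lambda$-integrand behaves like $(t-\lambda)^{-1}(\lambda-\tau)^{\frac{\alpha-2}{2}}$, which is logarithmically divergent at $\lambda=t$ no matter how the interval is split. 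The missing idea is the classical cancellation: write
\begin{align*}
\int_{\mathbb{R}^{N}} D_{x}^{2}Z_{0}(x-y,y,t,\lambda)\,\Phi(y,\xi,\lambda,\tau)\,dy
&=\int_{\mathbb{R}^{N}} D_{x}^{2}Z_{0}(x-y,y,t,\lambda)\bigl[\Phi(y,\xi,\lambda,\tau)-\Phi(x,\xi,\lambda,\tau)\bigr]dy\\
&\quad+\Phi(x,\xi,\lambda,\tau)\int_{\mathbb{R}^{N}} D_{x}^{2}Z_{0}(x-y,y,t,\lambda)\,dy,
\end{align*}
then use the H\"older continuity of $\Phi$ in its first variable (which you did establish) to gain a factor $(t-\lambda)^{\beta/2}$ in the first term, and for the second term exploit that $\int_{\mathbb{R}^{N}} D_{x}^{2}Z_{0}(x-y,\eta,t,\lambda)\,dy=0$ when the second argument $\eta$ is frozen (since $Z_{0}(\cdot,\eta,t,\lambda)$ integrates to $1$), so that the H\"older continuity of the coefficients in $y$ yields $\bigl\vert\int_{\mathbb{R}^{N}} D_{x}^{2}Z_{0}(x-y,y,t,\lambda)\,dy\bigr\vert\leqslant c(t-\lambda)^{-1+\frac{\alpha}{2}}$. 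Both contributions are then integrable in $\lambda$ and give the stated pointwise bound; the same subtraction, via $D_{t}Z_{0}=\sum_{i,j} a_{ij}(y,\lambda)D_{x_{i}x_{j}}Z_{0}$, handles the time derivative, and it is also what makes your H\"older-in-$x$ and H\"older-in-$t$ difference estimates for $2r+s=2$ go through near the diagonal. With this repair your argument coincides with the proof in the cited references.
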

We also introduce useful lemmas which will play an essential role when establishing the our result.
\begin{lemma}[\cite{hen} Theorem 1.4.3]\label{lem2}
    Suppose that $A$ is sectorial and $Re\sigma(A)>\delta>0$. Let the analytic semigroup ${T(t)}_{t\geqslant 0}$ be generated by $A$.  For $\alpha\geqslant0$, there exists $C_{\alpha} <\infty$ such that
    \begin{equation*}
        \Vert A^{\alpha}T(t) \Vert \leqslant C_{\alpha}t^{-\alpha}e^{-\delta t} \quad \text{for} \quad t>0,
    \end{equation*}
and if $0<\alpha \leqslant 1 ,x \in D(A^{\alpha})$,
    \begin{equation*}
        \Vert (T(t)-I)x\Vert \leqslant \frac{1}{\alpha} C_{1-\alpha}t^{\alpha}\Vert A^{\alpha}x\Vert.
    \end{equation*}\qed
\end{lemma}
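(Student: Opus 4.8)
Since Lemma \ref{lem2} is quoted from \cite{hen}, I would reconstruct its proof by the classical route for sectorial operators, deriving both inequalities from the Dunford contour representation together with the sectorial resolvent estimate. Write $T(t)=e^{-At}$ for the analytic semigroup associated with $A$. Because $A$ is sectorial with $Re\,\sigma(A)>\delta>0$, there are $M\geqslant 1$ and an angle $\theta\in(\pi/2,\pi)$ so that the sector $\{\lambda:\ |\arg(\lambda-\delta)|\leqslant\theta\}$ lies in the resolvent set and $\Vert(\lambda-A)^{-1}\Vert\leqslant M/|\lambda-\delta|$ there; this placement of the vertex at $\delta$ is legitimate since the spectrum sits strictly to the right of $\delta$. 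Denote by $\Gamma$ the boundary of this sector, a pair of rays $\lambda=\delta+re^{\pm i\theta}$ ($r\geqslant 0$) oriented with increasing imaginary part, closed off near the vertex by a small circular arc; along the rays $\cos\theta<0$ for the reflected description, so after orienting the contour around $\sigma(A)$ one has $Re\,\lambda\to+\infty$ and the factor $e^{-\lambda t}$ renders every integral below convergent.

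The starting point is $T(t)=\frac{1}{2\pi i}\int_{\Gamma}e^{-\lambda t}(\lambda-A)^{-1}\,d\lambda$ and, through the analytic functional calculus applied to $f(\lambda)=\lambda^{\alpha}e^{-\lambda t}$ (principal branch of $\lambda^{\alpha}$, holomorphic on $Re\,\lambda>0\supset\sigma(A)$),
\begin{equation*}
 A^{\alpha}T(t)=\frac{1}{2\pi i}\int_{\Gamma}\lambda^{\alpha}e^{-\lambda t}(\lambda-A)^{-1}\,d\lambda .
\end{equation*}
To prove the first estimate I would insert the resolvent bound and $|e^{-\lambda t}|=e^{-\delta t}e^{-t r|\cos\theta|}$, so that on the rays
\begin{equation*}
 \Vert A^{\alpha}T(t)\Vert\leqslant \frac{M}{\pi}\,e^{-\delta t}\int_{0}^{\infty}|\delta+re^{i\theta}|^{\alpha}\,e^{-t r|\cos\theta|}\,\frac{dr}{r},
\end{equation*}
and the substitution $s=tr$ extracts the homogeneity factor $t^{-\alpha}$, leaving a convergent Gamma-type integral absorbed into $C_{\alpha}$. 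The only delicate point is that the near-vertex factor $r^{-1}$ forces one to close $\Gamma$ with an arc of radius comparable to $1/t$, whose contribution one checks to be of the same order $t^{-\alpha}e^{-\delta t}$. The case $\alpha=0$ reduces to the boundedness-with-decay $\Vert T(t)\Vert\leqslant Ce^{-\delta t}$, handled by the same contour with the arc alone carrying the $r^{-1}$ term.

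For the second inequality I would use the first one with exponent $1-\alpha$. For $x\in D(A^{\alpha})$ with $0<\alpha\leqslant1$, the semigroup is differentiable for $s>0$ and $\frac{d}{ds}T(s)x=-AT(s)x=-A^{1-\alpha}T(s)A^{\alpha}x$, since $T(s)$ commutes with the fractional powers and $A=A^{1-\alpha}A^{\alpha}$ on $D(A)$. Integrating $\frac{d}{ds}T(s)x$ over $(0,t)$ and using $\Vert A^{1-\alpha}T(s)\Vert\leqslant C_{1-\alpha}s^{-(1-\alpha)}e^{-\delta s}$ together with $e^{-\delta s}\leqslant 1$,
\begin{align*}
 \Vert(T(t)-I)x\Vert
 &=\Big\Vert\int_{0}^{t}A^{1-\alpha}T(s)A^{\alpha}x\,ds\Big\Vert
 \leqslant C_{1-\alpha}\Vert A^{\alpha}x\Vert\int_{0}^{t}s^{\alpha-1}\,ds\\
 &=\frac{1}{\alpha}\,C_{1-\alpha}\,t^{\alpha}\,\Vert A^{\alpha}x\Vert ,
\end{align*}
which is exactly the claim.

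The main obstacle lies entirely in the first estimate: justifying the functional-calculus representation of $A^{\alpha}T(t)$ on an unbounded contour and, above all, balancing the tail of $\Gamma$ (which supplies the $t^{-\alpha}$ scaling and the $e^{-\delta t}$ decay) against the near-vertex arc (which must not destroy that scaling). Once $\Vert A^{\alpha}T(t)\Vert\leqslant C_{\alpha}t^{-\alpha}e^{-\delta t}$ is established, the displacement estimate follows by the short integration displayed above.
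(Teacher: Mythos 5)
The paper offers no proof of this lemma at all: it is quoted verbatim from Henry \cite{hen} (Theorem 1.4.3) with an immediate \(\qedsymbol\), so your reconstruction can only be compared with the classical argument in that reference. Your proof of the second inequality is exactly that argument and is correct: writing \((T(t)-I)x=-\int_0^t A^{1-\alpha}T(s)A^{\alpha}x\,ds\) for \(x\in D(A^{\alpha})\), applying the first estimate with exponent \(1-\alpha\), discarding \(e^{-\delta s}\leqslant 1\), and computing \(\int_0^t s^{\alpha-1}\,ds=t^{\alpha}/\alpha\) reproduces the stated constant \(\frac{1}{\alpha}C_{1-\alpha}\).

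Your proof of the first inequality, however, has a genuine gap in the large-time regime, located precisely at the point you call delicate. With the contour vertex pinned at \(\delta\), the ray integral truncated at \(r\sim 1/t\) becomes, after \(s=tr\), \(e^{-\delta t}\int_1^\infty(\delta+s/t)^{\alpha}e^{-cs}\,\frac{ds}{s}\), and the closing arc of radius \(1/t\) contributes about \(Me\,(\delta+1/t)^{\alpha}e^{-\delta t}\). For \(t\leqslant 1\) both terms are \(O(t^{-\alpha}e^{-\delta t})\), but for \(t\geqslant 1\) they are only \(O(e^{-\delta t})\): since \((\delta+1/t)^{\alpha}\to\delta^{\alpha}\neq 0\), your claim that the arc term is ``of the same order \(t^{-\alpha}e^{-\delta t}\)'' fails for large \(t\), and the argument as written does not yield the factor \(t^{-\alpha}\) there. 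The statement is true, but the proof must exploit the strictness of \(Re\,\sigma(A)>\delta\): either rerun your contour argument with vertex at some \(\delta''\in(\delta,\inf Re\,\sigma(A))\) and absorb \(e^{-(\delta''-\delta)t}\leqslant C_{\alpha}t^{-\alpha}\) for \(t\geqslant 1\), or follow Henry's route, which avoids the unbounded functional calculus you flag as needing justification (an issue for \(\alpha>1\) in particular): prove the integer case \(\Vert A^{n}T(t)\Vert\leqslant C_{n}t^{-n}e^{-\delta t}\) from \(A^{n}T(t)=(AT(t/n))^{n}\), then write \(A^{\alpha}T(t)=\frac{1}{\Gamma(n-\alpha)}\int_0^\infty s^{n-\alpha-1}A^{n}T(t+s)\,ds\) for \(n-1<\alpha<n\), where \(\int_0^\infty s^{n-\alpha-1}(t+s)^{-n}\,ds=t^{-\alpha}B(n-\alpha,\alpha)\) gives the bound uniformly in \(t>0\). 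A further, more cosmetic slip: with the convention \(T(t)=e^{-At}\) and \(\sigma(A)\) in a sector about the positive real axis, the resolvent estimate holds on \(\{\theta'\leqslant\vert\arg(\lambda-\delta)\vert\leqslant\pi\}\) with \(\theta'<\pi/2\), and the integration rays must open to the right (\(\cos\theta>0\)); your description with \(\theta\in(\pi/2,\pi)\) and the sector \(\vert\arg(\lambda-\delta)\vert\leqslant\theta\) inside the resolvent set places the spectrum in the resolvent set and should be corrected, though it does not affect the substance of the computation.
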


\begin{lemma}[\cite{regular1} Lemma 3.1]\label{lem3}
    Let $p \in [1,\infty)$ and ${T(t)}_{t\geqslant0}$ be the semigroup generated by $\Delta-I$ on $L^{p}(\mathbb{R}^{N})$. For every $t>0$, the operator $T(t)\nabla \cdot $ has a unique bounded extension on $(L^{p}(\mathbb{R}^{N}))^{N}$ satisfying
    \begin{equation*}
        \Vert T(t)\nabla \cdot u\Vert_{p} \leqslant C_{1} t^{-\frac{1}{2}}e^{-t}\Vert u \Vert_{p} \quad \forall u \in (L^{p}(\mathbb{R}^{N}))^{N}, t>0,
    \end{equation*}
    where $C_{1}$ depends only on $p$ and $N$. Furthermore, for every $q\in [p,\infty]$ we have that $T(t)\nabla \cdot u \in L^{q}(\mathbb{R}^{N})$ with
    \begin{equation*}
        \Vert T(t)\nabla \cdot u\Vert_{q} \leqslant C_{2} t^{-\frac{1}{2}-\frac{N}{2}(\frac{1}{p}-\frac{1}{q})}e^{-t}\Vert u \Vert_{p} \quad \forall u \in (L^{p}(\mathbb{R}^{N}))^{N}, t>0,
    \end{equation*}
    where $C_{2}$ is constant depending only on $N,q$ and $p$.
\end{lemma}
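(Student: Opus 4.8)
The plan is to realize the semigroup explicitly as a heat-kernel convolution and to move the divergence onto the kernel, thereby reducing both estimates to Young's convolution inequality together with a scaling computation of the $L^{r}$-norm of $\nabla G$. Since $A=-\Delta+I$ generates $\{T(t)\}_{t\geqslant0}$, we have $T(t)=e^{t(\Delta-I)}=e^{-t}e^{t\Delta}$, so that for $f\in L^{p}(\mathbb{R}^{N})$ one may write $T(t)f=e^{-t}\,G(\cdot,t)\ast f$, with $G(x,t)=(4\pi t)^{-N/2}e^{-|x|^{2}/(4t)}$ the Gaussian kernel introduced above. First I would record the elementary identity $\nabla_{x}G(x,t)=-\tfrac{x}{2t}G(x,t)$ and, for a test field $u=(u_{1},\dots,u_{N})\in(C_{c}^{\infty}(\mathbb{R}^{N}))^{N}$, use that convolution commutes with differentiation to transfer the derivative onto the kernel:
\[
T(t)\nabla\cdot u = e^{-t}\,G(\cdot,t)\ast\Big(\sum_{i}\partial_{x_{i}}u_{i}\Big)=e^{-t}\sum_{i}\big(\partial_{x_{i}}G(\cdot,t)\big)\ast u_{i}.
\]
This is the crucial step, since it replaces the unbounded operator $\nabla\cdot$ by convolution against the \emph{integrable} kernel $\nabla G$, so that no derivatives fall on $u$.

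Next I would treat the two bounds by Young's inequality. For the first, $\|f\ast g\|_{p}\leqslant\|f\|_{1}\|g\|_{p}$ gives $\|T(t)\nabla\cdot u\|_{p}\leqslant e^{-t}\,\|\nabla G(\cdot,t)\|_{1}\,\|u\|_{p}$, and the change of variables $x=\sqrt{t}\,y$ in the Gaussian integral yields $\|\nabla G(\cdot,t)\|_{1}=c\,t^{-1/2}$, giving exactly $C_{1}t^{-1/2}e^{-t}\|u\|_{p}$. For the second, I would apply Young in the form $\|f\ast g\|_{q}\leqslant\|f\|_{r}\|g\|_{p}$ with $\tfrac{1}{r}=1-(\tfrac{1}{p}-\tfrac{1}{q})$, a choice which satisfies $r\in[1,\infty]$ precisely when $q\in[p,\infty]$ and $p\geqslant1$; the same scaling computation gives $\|\nabla G(\cdot,t)\|_{r}=c\,t^{-\frac{1}{2}-\frac{N}{2}(\frac{1}{p}-\frac{1}{q})}$, whence the stated smoothing estimate with its exact exponent. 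In both computations the only analytic input is the finiteness of $\int_{\mathbb{R}^{N}}|y|^{r}e^{-r|y|^{2}/4}\,dy$, so the constants depend only on $N,p,q$, and the factor $e^{-t}$ is inherited directly from the $-I$ in the generator.

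Finally I would promote these inequalities from $C_{c}^{\infty}$ fields to all of $(L^{p}(\mathbb{R}^{N}))^{N}$ by density, valid since $p<\infty$: the convolution operator $u\mapsto e^{-t}\sum_{i}(\partial_{x_{i}}G(\cdot,t))\ast u_{i}$ is bounded from $(L^{p})^{N}$ into $L^{q}$ by the estimates just established, and it agrees with $T(t)\nabla\cdot$ on the dense subspace, so it is the desired unique bounded extension. The main subtlety I anticipate is exactly this last point: for a general $u\in(L^{p})^{N}$ the divergence $\nabla\cdot u$ exists only as a distribution, so one must argue that the \emph{a priori} distributional object $T(t)\nabla\cdot u$ is represented by the convolution formula rather than manipulate $\nabla\cdot u$ pointwise. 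Justifying the commutation of convolution and derivative in the distributional sense, and combining it with the density argument, is what makes the ``unique bounded extension'' assertion rigorous; everything else is the scaling bookkeeping for $\|\nabla G(\cdot,t)\|_{r}$.
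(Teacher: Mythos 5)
Your proof is correct; note that the paper itself states this lemma without proof, citing Salako--Shen \cite{regular1}, and your argument --- writing $T(t)=e^{-t}e^{t\Delta}$, moving the divergence onto the Gaussian kernel via $\nabla G(x,t)=-\tfrac{x}{2t}G(x,t)$, and applying Young's inequality with the scaling $\Vert \nabla G(\cdot,t)\Vert_{r}=c\,t^{-\frac{1}{2}-\frac{N}{2}(1-\frac{1}{r})}$ followed by density --- is essentially the same proof given in that reference. The only cosmetic point is the endpoint $q=\infty$, $p=1$, where $r=\infty$ and the scaling bound for $\Vert\nabla G(\cdot,t)\Vert_{\infty}$ comes from a supremum rather than the integral $\int_{\mathbb{R}^{N}}\vert y\vert^{r}e^{-r\vert y\vert^{2}/4}\,dy$, which changes nothing in the conclusion.
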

\begin{lemma}[\cite{regular3} Lemma 2.2]\label{lem4}
For every $u \in BUC(\mathbb{R}^{N})$, we have that for  $\lambda >0$,
\begin{equation*}
    \Vert (\Delta-\lambda)^{-1}u \Vert_{\infty} \leqslant \frac{1}{\lambda}\Vert u \Vert_{\infty}
\end{equation*}
 and
\begin{equation*}
    \Vert \nabla (\Delta - \lambda)^{-1} u \Vert_{\infty} \leqslant \frac{\sqrt{N}}{\sqrt{\lambda}} \Vert u \Vert_{\infty}.
\end{equation*}

\end{lemma}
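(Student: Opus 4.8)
The plan is to argue directly from the Bessel--potential representation recorded in the Preliminary section,
\[
(\lambda I-\Delta)^{-1}u(x)=\int_{0}^{\infty}e^{-\lambda s}\,\bigl(G(\cdot,s)\ast u\bigr)(x)\,ds,
\]
together with the elementary observation that $(\Delta-\lambda)^{-1}=-(\lambda I-\Delta)^{-1}$, so that both operators have identical operator norms. The first inequality is then a consequence of mass conservation for the heat kernel. Since $G(\cdot,s)\geqslant 0$ and $\int_{\mathbb{R}^{N}}G(y,s)\,dy=1$ for each $s>0$, one has $\Vert G(\cdot,s)\ast u\Vert_{\infty}\leqslant\Vert u\Vert_{\infty}$, and integrating the exponential weight via $\int_{0}^{\infty}e^{-\lambda s}\,ds=\lambda^{-1}$ gives $\Vert (\Delta-\lambda)^{-1}u\Vert_{\infty}\leqslant\lambda^{-1}\Vert u\Vert_{\infty}$.

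For the gradient estimate I would differentiate under the integral and use the explicit derivative of the Gaussian, $\nabla_{x}G(x,s)=-\frac{x}{2s}G(x,s)$. Estimating componentwise, $\bigl|\partial_{x_{i}}(G(\cdot,s)\ast u)(x)\bigr|\leqslant\Vert u\Vert_{\infty}\,\Vert\partial_{x_{i}}G(\cdot,s)\Vert_{L^{1}}$, so the heart of the matter is the $L^{1}$ norm of a single partial derivative. Because $G$ factors as a tensor product of one-dimensional Gaussians, this norm reduces to one first absolute moment: all integrals in the directions $j\neq i$ equal $1$, while $\int_{\mathbb{R}}|x_{i}|(4\pi s)^{-1/2}e^{-x_{i}^{2}/(4s)}\,dx_{i}=2\sqrt{s/\pi}$, so that $\Vert\partial_{x_{i}}G(\cdot,s)\Vert_{L^{1}}=\frac{1}{2s}\cdot 2\sqrt{s/\pi}=(\pi s)^{-1/2}$ independently of $i$.

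It then remains to integrate this kernel against $e^{-\lambda s}$, where the decisive ingredient is the Gamma--function evaluation $\int_{0}^{\infty}e^{-\lambda s}s^{-1/2}\,ds=\Gamma(\tfrac{1}{2})\lambda^{-1/2}=\sqrt{\pi/\lambda}$. This exactly cancels the $\sqrt{\pi}$ in the denominator and yields $\bigl|\partial_{x_{i}}(\Delta-\lambda)^{-1}u(x)\bigr|\leqslant\lambda^{-1/2}\Vert u\Vert_{\infty}$ for every coordinate $i$. Forming the Euclidean norm of the gradient and summing the $N$ equal contributions then produces $\bigl|\nabla(\Delta-\lambda)^{-1}u(x)\bigr|\leqslant\sqrt{N}\,\lambda^{-1/2}\Vert u\Vert_{\infty}$, which is the claimed bound.

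The only genuinely delicate points are the legitimacy of differentiating under the integral sign and the precise value of the Gaussian first absolute moment. Both are controlled by the hypothesis $u\in BUC(\mathbb{R}^{N})$, which makes all convolutions well defined and bounded, together with the integrability of $e^{-\lambda s}(\pi s)^{-1/2}$ near $s=0$ and its exponential decay at infinity; once the constant $(\pi s)^{-1/2}$ is correctly identified, the sharp constants $\lambda^{-1}$ and $\sqrt{N}\,\lambda^{-1/2}$ emerge with no slack.
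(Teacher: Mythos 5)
Your proof is correct: the paper itself states this lemma without proof, citing Lemma 2.2 of Wang--Guo \cite{regular3}, and your argument is precisely the standard one underlying that citation, using the Bessel-potential representation $(\lambda I-\Delta)^{-1}u=\int_{0}^{\infty}e^{-\lambda s}G(\cdot,s)\ast u\,ds$ already set up in the paper's Preliminary section. All the computations check out, in particular $\Vert\partial_{x_{i}}G(\cdot,s)\Vert_{L^{1}}=(\pi s)^{-1/2}$ and $\int_{0}^{\infty}e^{-\lambda s}s^{-1/2}\,ds=\sqrt{\pi/\lambda}$, so the sharp constants $\lambda^{-1}$ and $\sqrt{N}\lambda^{-1/2}$ emerge exactly as claimed.
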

Let $W^{k,p}(\mathbb{R}^{N})$ be the space of functions $ u $ in $L^{p}(\mathbb{R}^{N})$ whose distribution derivatives of order up to $k$ are also in $L^{p}(\mathbb{R}^{N})$ (\cite{nonlinear}).\ Let we denote $W^{k,2}(\mathbb{R}^{N})=H^{k}(\mathbb{R}^{N})$.
\begin{lemma}[\cite{hen} p77]\label{lem5}
Let $X^{0}=L^{p}(\mathbb{R}^{N})$ and $ X^{1}=Dom(\Delta-I)=W^{2,p}(\mathbb{R}^{N})$. Then we have the following property for $0\leqslant \alpha \leqslant 1 $ and $2\leqslant p < \infty $ with $p>\frac{N}{2}$,
\begin{align*}
    &X^{\alpha}=W^{k,p}(\mathbb{R}^{N}), \quad \text{when} \ 2\alpha=k \ \text{ is an integer,}\\
    &X^{\alpha} \subset L^{q}(\mathbb{R}^{N}), \quad \text{when} \ 2p\alpha <N, \ \frac{1}{q} \geqslant \frac{1}{p}-\frac{2\alpha}{N} , \ \infty>q\geqslant p,\\
    &X^{\alpha} \subset C^{v}(\mathbb{R}^{N}), \quad \text{when} \ 2p\alpha>N, \ 0 \leqslant v < 2\alpha-\frac{N}{p},\\
    &X^{\alpha} \subset W^{k,q}(\mathbb{R}^{N}), \quad \text{when} \ 0 \leqslant k \leqslant 2\alpha \ \text{and}\ \frac{1}{q} \geqslant \frac{1}{p}-\frac{2\alpha-k}{N}.
\end{align*}
\end{lemma}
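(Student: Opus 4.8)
The plan is to prove Theorem \ref{thm} through a bootstrap argument, establishing $L^p$ bounds for $u$ by exploiting the repulsion-dominant structure $l<m$ (respectively the subcriticality $l=m<\frac{2}{N}$) of the signal production, and then upgrading these to the $L^\infty$ bound via the smoothing properties of the Neumann heat semigroup. First I would fix a maximal existence time $T_{\max}$ for the local mild solution (whose construction is addressed separately by the fixed-point argument mentioned after the theorem) and derive the elliptic representations $v=(\lambda_1 I-\Delta)^{-1}f_1(u)$ and $w=(\lambda_2 I-\Delta)^{-1}f_2(u)$ from the second and third equations. The key qualitative input is mass conservation: integrating the first equation over $\mathbb{R}^N$ gives $\|u(\cdot,t)\|_1=\|u_0\|_1$ for all $t$, which anchors the whole scheme at $p=1$.

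The heart of the argument is a differential inequality for $\frac{d}{dt}\int u^p$. After testing the first equation with $u^{p-1}$ and integrating by parts, the diffusion produces a good dissipative term $-c\,p(p-1)\int u^{p-2}|\nabla u|^2\sim -\int|\nabla u^{p/2}|^2$, while the attraction and repulsion contribute cross terms involving $\int u^p\Delta v$ and $\int u^p\Delta w$. Using the elliptic equations to substitute $\Delta v=\lambda_1 v-f_1(u)$ and $\Delta w=\lambda_2 w-f_2(u)$, these become $\xi_1\int u^p(\lambda_1 v-f_1(u))$ and $-\xi_2\int u^p(\lambda_2 w-f_2(u))$; the crucial sign-definite gain is the repulsion term $+\xi_2\int u^p f_2(u)\gtrsim \int u^{p+m}$, whereas the attraction term is controlled from above by $\xi_1 c_1\int u^{p+l}$. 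Since $l<m$, Young's inequality lets me absorb $\int u^{p+l}$ into a small multiple of $\int u^{p+m}$ plus a lower-order constant, so the dangerous growth is dominated by the dissipation coming from repulsion. In the critical-type case $l=m<\frac{2}{N}$ one instead exploits the Gagliardo--Nirenberg inequality to interpolate $\int u^{p+l}$ between $\int|\nabla u^{p/2}|^2$ and $\|u\|_1$, the condition $l<\frac{2}{N}$ guaranteeing that the interpolation exponent on the gradient term is strictly less than one, so it can be absorbed by the diffusion. Either way I obtain $\frac{d}{dt}\int u^p\le -\delta\int u^p+C_p$, which by Grönwall yields a uniform-in-time bound $\|u\|_p\le C_p$ for every fixed $p<\infty$.

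Next I would iterate this over an increasing sequence of exponents $p_k\to\infty$, tracking that the constants $C_{p_k}$ remain controlled; combined with the semigroup estimates of Lemma \ref{lem3}, the Duhamel representation $u(t)=T(t)u_0+\int_0^t T(t-s)\nabla\cdot(\xi_1 u\nabla v-\xi_2 u\nabla w)\,ds$ then gives the $L^\infty$ bound. Here I bound $\|u(t)\|_\infty\le\|T(t)u_0\|_\infty+\int_0^t (t-s)^{-\frac12-\frac{N}{2q}}e^{-(t-s)}\|u(\nabla v-\nabla w)\|_q\,ds$, using Lemma \ref{lem4} and the elliptic regularity of $v,w$ to estimate $\|\nabla v\|_\infty,\|\nabla w\|_\infty$ in terms of $\|f_i(u)\|_\infty$ or suitable $L^p$ norms, choosing $q$ large enough that the time-singularity is integrable. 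The stated $L^p$ bounds on $v$ and $w$ follow directly from the Bessel-potential boundedness of $(\lambda_i I-\Delta)^{-1}$ on $L^p$ together with the pointwise bounds $f_1(u)\le c_1 u^l$, $f_2(u)\le c_2 u^m$: writing $p_v$ for the exponent, $\|v\|_p\lesssim\|f_1(u)\|_{p/?}$, and the restriction $p\ge\frac{1}{l}$ (when $l<1$) arises precisely because $u^l\in L^{1/l}$ needs $u\in L^1$, which is exactly mass conservation.

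The main obstacle I anticipate is the $L^\infty$-bootstrap in the borderline regime where $l$ is only slightly above $\frac{2}{N}$: there the gain from repulsion must be balanced delicately against the attraction, and keeping the constants $C_{p_k}$ uniform as $p_k\to\infty$ requires a careful Moser-type iteration rather than a crude term-by-term estimate. A secondary technical difficulty is that on $\mathbb{R}^N$ one cannot freely use embeddings valid only on bounded domains, so every Gagliardo--Nirenberg step must be the genuine whole-space version, and the absorption of $\int u^{p+l}$ must respect the scaling constraint $l<\frac{2}{N}$ exactly. Once the uniform $L^\infty$ bound on $u$ is secured, the remaining bounds on $v$ and $w$, and the promotion of the mild solution to a global classical bounded solution, follow from the parabolic regularity theory summarized in Lemma \ref{lem1} and the fractional-power-space embeddings of Lemma \ref{lem5}.
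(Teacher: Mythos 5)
Your proposal does not address the statement at all. The statement to be proved is Lemma \ref{lem5}, the embedding lemma for the fractional power spaces $X^{\alpha}$ associated with the sectorial operator $A=I-\Delta$ on $L^{p}(\mathbb{R}^{N})$: the identification $X^{k/2}=W^{k,p}(\mathbb{R}^{N})$ and the embeddings $X^{\alpha}\subset L^{q}$, $X^{\alpha}\subset C^{v}$, $X^{\alpha}\subset W^{k,q}$ under the stated index relations. What you wrote instead is a strategy for the main boundedness result (Theorem \ref{thm}): mass conservation, testing the first equation with $u^{p-1}$, absorbing $\int u^{p+l}$ using $l<m$ or Gagliardo--Nirenberg when $l=m<\frac{2}{N}$, and a Duhamel/Moser bootstrap to $L^{\infty}$. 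None of this bears on Lemma \ref{lem5}; indeed your final sentence \emph{invokes} Lemma \ref{lem5} as a tool, which would be circular if this text were meant to serve as its proof.

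For the record, the paper itself gives no proof of this lemma: it is quoted from Henry's book (p.~77 of the cited reference), where the argument runs through the representation $A^{-\alpha}=\frac{1}{\Gamma(\alpha)}\int_{0}^{\infty}t^{\alpha-1}e^{-tA}\,dt$ combined with the $L^{p}$--$L^{q}$ and gradient smoothing estimates of the heat semigroup $e^{t\Delta}$ (the factor $e^{-t}$ coming from $A=I-\Delta$ ensures convergence of the integral at infinity); one shows $A^{-\alpha}$ maps $L^{p}$ boundedly into $L^{q}$ when $\frac{1}{q}\geqslant\frac{1}{p}-\frac{2\alpha}{N}$, into the H\"older class $C^{v}$ when $2\alpha-\frac{N}{p}>v$, and so on. Equivalently, one identifies $X^{\alpha}$ with the Bessel potential space $H^{2\alpha,p}(\mathbb{R}^{N})$ and invokes the classical Sobolev and Morrey embedding theorems. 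If you wish to supply a proof of the actual statement, that semigroup-kernel computation (or the Bessel-potential identification plus standard embeddings) is the route; the chemotaxis energy estimates in your proposal belong to a different statement entirely.
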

\begin{lemma}[\cite{suki1} Lemma 2.4]\label{lem11}
 Let $N\geqslant1,m\geqslant1,a>2,u \in L^{q_{1}}(\mathbb{R}^{N})$ with $q_{1}\geqslant 1$ and $u^{\frac{r+m-1}{2}} \in H^{1}(\mathbb{R}^{N})$ with $r>0$. If $q_{1} \in [1,r+m-1],q_{2} \in \left[\frac{r+m-1}{2},\frac{a(r+m-1)}{2}\right]$ and
    \begin{align*}
        &1\leqslant q_{1} \leqslant q_{2} \leqslant \infty \quad \text{when} \ N=1, \nonumber \\
        &1\leqslant q_{1} \leqslant q_{2} < \infty \quad \text{when} \ N=2,  \\
        &1\leqslant q_{1} \leqslant q_{2} \leqslant  \frac{(r+m-1)N}{N-2} \quad \text{when} \ N\geqslant 3,\nonumber
    \end{align*}
    then it holds that
    \begin{align*}
        &\Vert u\Vert_{q_{2}} \leqslant C^{\frac{2}{r+m-1}}\Vert u \Vert_{q_{1}}^{1-\Theta}\Vert \nabla u^{\frac{r+m-1}{2}}\Vert_{2}^{\frac{2\Theta}{r+m-1}} \text{with} \\
        &\Theta=\frac{r+m-1}{2}\left(\frac{1}{q_{1}}-\frac{1}{q_{2}}\right)\left(\frac{1}{N}-\frac{1}{2}+\frac{r+m-1}{2q_{1}}\right)^{-1} ,
    \end{align*}
    where
    \begin{align*}
        &C \ \text{depends only on}\ N \ \text{and} \ \alpha \quad \text{when} \quad q_{1}\geqslant \frac{r+m-1}{2},\\
        &C=c_{0}^{\frac{1}{\beta}} \ \text{with} \ c_{0} \  \text{depending only on} \ N \ \text{and}\ \alpha \quad \text{when}\ 1\leqslant q_{1} <\frac{r+m-1}{2},
    \end{align*}
    and
    \begin{equation*}
        \beta=\frac{q_{2}-\frac{r+m-1}{2}}{q_{2}-q_{1}}\left[\frac{2q_{1}}{r+m-1}+\left(1-\frac{2q_{1}}{r+m-1}\right)\frac{2N}{N+2}\right].
    \end{equation*}
    \qed
\end{lemma}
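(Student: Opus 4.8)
The plan is to reduce the stated inequality to a classical Gagliardo–Nirenberg inequality in $H^{1}(\mathbb{R}^{N})$ through the substitution $w:=u^{\frac{r+m-1}{2}}$, which lies in $H^{1}(\mathbb{R}^{N})$ by hypothesis. Setting $a_{i}:=\frac{2q_{i}}{r+m-1}$ for $i=1,2$, one has $\|w\|_{a_{i}}^{a_{i}}=\int_{\mathbb{R}^{N}}u^{q_{i}}$, hence $\|w\|_{a_{i}}=\|u\|_{q_{i}}^{\frac{r+m-1}{2}}$, so that, after raising to the power $\frac{2}{r+m-1}$, the claim is \emph{equivalent} to
\begin{equation*}
\|w\|_{a_{2}}\leqslant C\,\|w\|_{a_{1}}^{1-\Theta}\,\|\nabla w\|_{2}^{\Theta}.
\end{equation*}
A direct computation shows that the exponent $\Theta$ in the statement is exactly the Gagliardo–Nirenberg exponent attached to $(a_{1},a_{2})$, i.e. the $\theta$ solving $\frac{1}{a_{2}}=(1-\theta)\frac{1}{a_{1}}+\theta\!\left(\frac{1}{2}-\frac{1}{N}\right)$. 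Moreover this value is \emph{forced}: testing the inequality on the dilations $w(\lambda\,\cdot)$ and matching the powers of $\lambda$ yields precisely that relation, so the only genuine content is the finiteness and explicit form of the constant $C$. The three dimensional restrictions on $q_{2}$ translate into $a_{2}\leqslant\infty$ $(N=1)$, $a_{2}<\infty$ $(N=2)$, $a_{2}\leqslant\frac{2N}{N-2}$ $(N\geqslant3)$, i.e. exactly the range where $H^{1}(\mathbb{R}^{N})\hookrightarrow L^{a_{2}}(\mathbb{R}^{N})$.

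When $q_{1}\geqslant\frac{r+m-1}{2}$, that is $a_{1}\geqslant1$, the displayed inequality is the standard Gagliardo–Nirenberg–Sobolev inequality for $w\in H^{1}(\mathbb{R}^{N})$ with two genuine Lebesgue exponents $a_{1}\leqslant a_{2}$; I would simply invoke it, the constant depending only on $N$ and the range parameter $a$. This settles the first case.

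The main difficulty is the case $1\leqslant q_{1}<\frac{r+m-1}{2}$, where $a_{1}<1$ and the classical inequality fails because its lower exponent must be at least $1$. Here the plan is an interpolation–absorption argument routed through the exponent $1$. First apply the classical inequality with lower exponent $1$,
\begin{equation*}
\|w\|_{a_{2}}\leqslant c_{0}\,\|w\|_{1}^{1-\bar\theta}\,\|\nabla w\|_{2}^{\bar\theta},\qquad \tfrac{1}{a_{2}}=(1-\bar\theta)+\bar\theta\!\left(\tfrac12-\tfrac1N\right),
\end{equation*}
which is legitimate since $a_{2}\geqslant1$. Then interpolate the $L^{1}$ norm between $L^{a_{1}}$ and $L^{a_{2}}$: with $\mu=\frac{a_{1}(a_{2}-1)}{a_{2}-a_{1}}$ determined by $1=\frac{\mu}{a_{1}}+\frac{1-\mu}{a_{2}}$ one gets $\|w\|_{1}\leqslant\|w\|_{a_{1}}^{\mu}\|w\|_{a_{2}}^{1-\mu}$. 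Crucially, this step survives $a_{1}<1$, because the underlying Hölder inequality only needs the \emph{conjugate} exponents $\frac{a_{1}}{\mu}$ and $\frac{a_{2}}{1-\mu}$ to be $\geqslant1$, which holds. Substituting and absorbing $\|w\|_{a_{2}}^{(1-\mu)(1-\bar\theta)}$ into the left-hand side gives
\begin{equation*}
\|w\|_{a_{2}}^{\beta}\leqslant c_{0}\,\|w\|_{a_{1}}^{\mu(1-\bar\theta)}\,\|\nabla w\|_{2}^{\bar\theta},\qquad \beta:=1-(1-\mu)(1-\bar\theta),
\end{equation*}
and dividing yields the inequality with constant $C=c_{0}^{1/\beta}$.

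The remaining work is bookkeeping, which is the part I expect to require care. One must check $\beta>0$, which follows from $\mu\in(0,1)$ and $\bar\theta\in[0,1)$ in the relevant range $q_{2}>\frac{r+m-1}{2}$, so that the absorption is valid; and one must verify that after dividing, the exponent of $\|\nabla w\|_{2}$ is $\Theta$ and that of $\|w\|_{a_{1}}$ is $1-\Theta$. The latter is automatic from the scaling identity noted above, so only the constant is truly at stake. Finally, inserting $\mu=\frac{a_{1}(a_{2}-1)}{a_{2}-a_{1}}$ and $1-\bar\theta=\big(\tfrac{1}{a_{2}}-\tfrac12+\tfrac1N\big)\big/\big(\tfrac12+\tfrac1N\big)$ into $\beta=1-(1-\mu)(1-\bar\theta)$ and returning to $q_{1},q_{2}$ via $a_{i}=\frac{2q_{i}}{r+m-1}$ reproduces, after a direct if tedious reduction, the closed form
\begin{equation*}
\beta=\frac{q_{2}-\frac{r+m-1}{2}}{q_{2}-q_{1}}\left[\frac{2q_{1}}{r+m-1}+\left(1-\frac{2q_{1}}{r+m-1}\right)\frac{2N}{N+2}\right]
\end{equation*}
asserted in the statement; this algebraic identity is the only laborious point of the proof.
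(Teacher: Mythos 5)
The paper offers no proof of this lemma to compare against: it is imported verbatim from Sugiyama \cite{suki1} (Lemma 2.4) and stated with an immediate end-of-proof mark. Your reconstruction is correct, and it is in substance the argument that produces the stated constant: the reduction $w=u^{\frac{r+m-1}{2}}$, $a_i=\frac{2q_i}{r+m-1}$; the classical Gagliardo--Nirenberg inequality when $a_1\geqslant 1$; and, when $a_1<1$, Gagliardo--Nirenberg through the exponent $1$ followed by the interpolation $\Vert w\Vert_1\leqslant \Vert w\Vert_{a_1}^{\mu}\Vert w\Vert_{a_2}^{1-\mu}$ (which, as you correctly note, only needs the H\"older exponents $\frac{a_1}{\mu},\frac{a_2}{1-\mu}\geqslant 1$ and so survives $a_1<1$) and absorption. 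The tell-tale sign that this matches the source is the constant: your $\beta=1-(1-\mu)(1-\bar\theta)=\bar\theta+\mu(1-\bar\theta)$ evaluates, with $\mu=\frac{a_1(a_2-1)}{a_2-a_1}$ and $1-\bar\theta=\frac{2N-(N-2)a_2}{(N+2)a_2}$, to $\frac{(a_2-1)\left(2N-(N-2)a_1\right)}{(N+2)(a_2-a_1)}$, which is exactly the stated $\beta$ after substituting $a_i=\frac{2q_i}{r+m-1}$; and since the two final exponents sum to $\frac{\mu(1-\bar\theta)+\bar\theta}{\beta}=1$, homogeneity plus your dilation computation does pin the gradient exponent to $\Theta$ (one can also verify directly that $\bar\theta/\beta=\Theta$). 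Two small points deserve explicit mention in a complete write-up. First, the absorption step presupposes $\Vert w\Vert_{a_2}<\infty$; this holds because $w\in L^{a_1}\cap L^{2}\cap L^{\frac{2N}{N-2}}$ (from $u\in L^{q_1}$ and $w\in H^{1}$) and $a_1\leqslant a_2\leqslant \frac{2N}{N-2}$, so $L^{a_2}$-finiteness follows by interpolation. Second, at the endpoint $q_2=\frac{r+m-1}{2}$ with $q_1<q_2$ your scheme genuinely degenerates ($a_2=1$ forces $\bar\theta=\mu=0$, hence $\beta=0$), but the statement itself degenerates there as well, since its constant $c_0^{1/\beta}$ is meaningless at $\beta=0$; your tacit restriction to $q_2>\frac{r+m-1}{2}$ is therefore the correct reading of the lemma, and at the opposite Sobolev endpoint $a_2=\frac{2N}{N-2}$ one has $\bar\theta=1$, $\beta=1$, where the argument reduces harmlessly to the Sobolev inequality. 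With these caveats recorded, the proposal is a complete and correct proof of the quoted lemma.
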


\section{Proof of Theorem 1.1}

This section is devoted to prove Theorem 1.1. At first, we will prove the existence of the mild solution by using the Banach fixed point method and the semigroup property.

\begin{lemma}\label{lem6}
    Assume that the nonnegative initial data, $u_{0}$, of the problem (\ref{0}) belongs to $L^{1}(\mathbb{R}^{N})\cap BUC(\mathbb{R}^{N})$ with $N\geqslant2$, then (1.1) has a unique local mild solution such that $ u \in C([0,T];L^{1}(\mathbb{R}^{N})\cap BUC(\mathbb{R}^{N}))$ and $ u(t,\cdot) \in X^{\beta}$ for $0<\beta <\frac{1}{2}$ where $X^{0}=L^{p}(\mathbb{R}^{N}),X^{1}=W^{2,p}(\mathbb{R}^{N})$ with $p>N$ or $X^{0}=BUC(\mathbb{R}^{N})$.
\end{lemma}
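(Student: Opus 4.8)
The plan is to establish local existence via a standard Banach fixed point argument applied to the mild (integral) formulation of the $u$-equation, after first eliminating $v$ and $w$ through the elliptic solution operators. Since the second and third equations are elliptic with no time derivative, I would write $v=(\lambda_1 I-\Delta)^{-1}f_1(u)$ and $w=(\lambda_2 I-\Delta)^{-1}f_2(u)$ using the Bessel potential representation from the Preliminary section. Substituting these into the first equation reduces the system to a single nonlocal parabolic equation for $u$ alone, whose mild formulation reads
\begin{equation*}
u(t)=T(t)u_0-\int_0^t T(t-s)\,\nabla\cdot\bigl(\xi_1 u\nabla v-\xi_2 u\nabla w\bigr)(s)\,ds,
\end{equation*}
where $T(t)$ is the analytic semigroup generated by $\Delta-\lambda I$ (absorbing the zeroth-order term via a shift if needed, as in Lemma \ref{lem3}).

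Next I would set up the fixed point map $\Psi(u)$ defined by the right-hand side above, working in a closed ball of the Banach space $C([0,T];L^1(\mathbb{R}^N)\cap BUC(\mathbb{R}^N))$ around $u_0$. The key estimates are the smoothing bounds: Lemma \ref{lem3} controls $\|T(t)\nabla\cdot(\cdot)\|_p$ with the integrable-in-time singularity $t^{-1/2}e^{-t}$, and Lemma \ref{lem4} bounds $\|\nabla(\Delta-\lambda)^{-1}f_i(u)\|_\infty$ in terms of $\|f_i(u)\|_\infty$. Because $f_i\in C^1(\mathbb{R})$ (after the extension described just before the theorem), the Nemytskii-type maps $u\mapsto f_i(u)$ are locally Lipschitz on bounded subsets of $BUC$, so the drift term $\xi_1 u\nabla v-\xi_2 u\nabla w$ is locally Lipschitz in $u$. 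Combining these, the $t^{-1/2}$ singularity integrates to $T^{1/2}$, which yields a contraction factor $CT^{1/2}$; choosing $T$ small makes $\Psi$ a self-map and a contraction, giving a unique fixed point. The $L^1$ component is handled in parallel using the $p=q=1$ case of Lemma \ref{lem3}.

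For the regularity claim $u(t,\cdot)\in X^\beta$ with $0<\beta<\frac12$, I would invoke the standard parabolic smoothing for analytic semigroups (Lemma \ref{lem2}): applying $A^\beta$ to the Duhamel term and using $\|A^\beta T(t)\|\le C_\beta t^{-\beta}e^{-\delta t}$ together with the fact that $\nabla\cdot$ costs one half-derivative, the resulting time singularity is $t^{-\beta-1/2}$, which is integrable precisely when $\beta+\tfrac12<1$, i.e. $\beta<\tfrac12$. This is exactly the range asserted, so the restriction is natural and not an artifact. The homogeneous term $T(t)u_0$ lands in $X^\beta$ for $t>0$ by the same lemma.

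The main obstacle I anticipate is the interplay between the two norms $L^1$ and $BUC$ and ensuring the nonlocal elliptic operators map between them consistently: Lemma \ref{lem4} is stated for $BUC$, while controlling $\nabla v$ in the $L^1$-estimate requires separate bounds on $\nabla(\Delta-\lambda)^{-1}f_i(u)$ in $L^1$, which the Bessel potential kernel supplies but must be verified with the correct growth in the nonlinearity. A secondary subtlety is that the nonlinear growth $f_1(s)\le c_1 s^l$ with possibly $l<1$ means $f_1(u)$ need not be globally Lipschitz, so the contraction argument must be confined to a bounded ball where local Lipschitz continuity of $f_i$ (guaranteed by $f_i\in C^1$) suffices; the global-in-time boundedness is not claimed here and is deferred to the later a priori estimates (Lemmas \ref{lem9}, \ref{lem10}). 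Once the contraction is set up on a ball whose radius and time horizon are chosen compatibly, the standard continuation of the semigroup solution completes the proof of local well-posedness.
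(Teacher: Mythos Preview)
Your proposal is correct and follows essentially the same route as the paper: Banach fixed point on a ball in $C([0,T];L^{1}\cap BUC)$ for the Duhamel map, using Lemma~\ref{lem3} for $T(t)\nabla\cdot$ and Lemma~\ref{lem4} for $\nabla(\Delta-\lambda_i)^{-1}f_i(u)$, local Lipschitz continuity of $f_i$ from $f_i\in C^1$, and then the $X^\beta$ regularity via the $t^{-\beta-1/2}$ singularity from Lemma~\ref{lem2}. The obstacle you anticipate in the $L^1$-estimate is not actually present: the paper (and you should too) simply bounds $\Vert u\nabla v\Vert_{1}\leqslant \Vert u\Vert_{1}\Vert\nabla v\Vert_{\infty}$, so Lemma~\ref{lem4} suffices and no separate $L^1$ control of $\nabla v$ is needed.
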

\begin{proof}
    Put $R > \Vert u_{0} \Vert_{1}+\Vert u_{0} \Vert_{\infty}$
    \ and
    \begin{equation*}
     S_{R,T}:=\{u\in C([0,T];L^{1}(\mathbb{R}^{N})\cap BUC(\mathbb{R}^{N})) : \sup_{0\leqslant t \leqslant T}(\Vert u \Vert_{1} +\Vert u \Vert_{\infty}) \leqslant R\} ,
    \end{equation*}
    with the norm $\Vert \cdot \Vert _{S_{R,T}}:= \sup_{0\leqslant t \leqslant T} \Vert u \Vert_{X}$ where $\Vert \cdot \Vert_{X} =\Vert \cdot \Vert_{1} + \Vert \cdot \Vert_{\infty}$.
    \begin{align}
        Ku(t):=T(t)u_{0}+\int_{0}^{t}T(t-s) \nabla \cdot (\xi_{1}u(s)\nabla(\Delta-\lambda_{1}I)^{-1}f_{1}(u(s))-\nonumber \\ \xi_{2}u(s)\nabla(\Delta-\lambda_{2}I)^{-1}f_{2}(u(s))) ds+\int_{0}^{t}T(t-s)u(s)ds.\nonumber
    \end{align}
    Put
    \begin{align*}
        &F_{1}(s):=T(t-s) \nabla \cdot (\xi_{1}u(s) \nabla(\Delta-\lambda_{1}I)^{-1}f_{1}(u(s))-\xi_{2}u(s)\nabla(\Delta-\lambda_{2}I)^{-1}f_{2}(u(s))),\\
        &F_{2}(s):=T(t-s)u(s) \; \text{for}\ s \in [0,T].
    \end{align*}
    \textbf{Claim 1}: $(Ku)(t)$ is well defined for $u \in S_{R,T}$ and $t \in [0,T].$

    Put
    \begin{align*}
        &F_{1,\epsilon}(s)\\
        &:=T(t-s) \nabla \cdot (\xi_{1}u(s) \nabla(\Delta-\lambda_{1}I)^{-1}f_{1}(u(s))-\xi_{2}u(s)\nabla(\Delta-\lambda_{2}I)^{-1}f_{2}(u(s)))\\
        &=T(t-\epsilon-s)T(\epsilon)\nabla \cdot (\xi_{1}u(s) \nabla(\Delta-\lambda_{1}I)^{-1}f_{1}(u(s))-\xi_{2}u(s)\nabla(\Delta-\lambda_{2}I)^{-1}f_{2}(u(s))),
    \end{align*}
    for $0<\epsilon <t $ and $s \in [0,t-\epsilon].$ By using Lemma \ref{lem3} and \ref{lem4} we have that
    \begin{equation*}
        [0,t-\epsilon] \ni s \mapsto T(\epsilon)\nabla \cdot (\xi_{1}u(s) \nabla(\Delta-\lambda_{1}I)^{-1}f_{1}(u(s))-\xi_{2}u(s)\nabla(\Delta-\lambda_{2}I)^{-1}f_{2}(u(s)))
    \end{equation*}
    is continuous for every $0<\epsilon<t\in[0,T]$. Since $\epsilon>0$ is arbitrary, the function $F_{1}:[0,T) \rightarrow L^{1}(\mathbb{R}^{N}) \cap BUC(\mathbb{R}^{N})$ is continuous. Moreover, using Lemma \ref{lem3} and \ref{lem4} we obtain
    \begin{align*}
        \int_{0}^{t} \Vert F_{1}(s) \Vert_{X} &\leqslant c \int_{0}^{t} \Vert T(t-s) \nabla \cdot \Vert_{X}\Vert u(s) \Vert_{X}(\Vert \nabla(\Delta-\lambda_{1}I)^{-1}f_{1}(u(s)) \Vert_{\infty}\\ & \quad + \Vert \nabla(\Delta-\lambda_{2}I)^{-1}f_{2}(u(s))\Vert_{\infty} ) ds\\
        &\leqslant c(\xi_{1},\xi_{2},R,l,m,\lambda_{1},\lambda_{2},N) \int_{0}^{t} (t-s)^{-\frac{1}{2}}e^{-(t-s)} ds \\ &\leqslant c(\xi_{1},\xi_{2},R,l,m,\lambda_{1},\lambda_{2},N) \Gamma(\frac{1}{2}),
    \end{align*}
    where  $\Gamma(\cdot)$ is the Gamma function. Thus, the integral $\int_{0}^{t} F_{1}(s) ds$ in $L^{1}(\mathbb{R}^{N})\cap BUC(\mathbb{R}^{N})$.

    By the similar argument, we conclude that
    \begin{equation*}
        [0,T) \ni s \mapsto F_{2}(s):=T(t-s)u(s) \in L^{1}(\mathbb{R}^{N}) \cap BUC(\mathbb{R}^{N})
    \end{equation*}
    is continuous and $\int_{0}^{t} F_{2}(s) ds \in L^{1}(\mathbb{R}^{N}) \cap BUC(\mathbb{R}^{N}).$

    \textbf{Claim 2}: There exists $T=T(R)$ such that $K$ maps $S_{R,T}$ into itself.

    Let $u\in S_{R,T}$. Then we have
    \begin{equation}
    \begin{aligned}
        \Vert Ku \Vert_{1} \leqslant &\Vert T(t)u_{0} \Vert_{1}\nonumber\\ & \ + \int_{0}^{t} \Vert T(t-s)\nabla \cdot (\xi_{1}u(s) \nabla (\lambda_{1}I-\Delta)^{-1} f_{1}(u(s))-\\ & \quad \xi_{2}u(s) \nabla (\lambda_{1}I-\Delta)^{-1}f_{1}(u(s))) \Vert_{1} ds \nonumber \\
        & \ + \int_{0}^{t}\Vert T(t-s)u(s)\Vert_{1}ds \\
        \leqslant & e^{-t}\Vert u_{0}\Vert_{1}+C(R,l,m,\lambda_{1},\lambda_{2},N,\xi_{1},\xi_{2})\int_{0}^{t}(t-s)^{-\frac{1}{2}}e^{-(t-s)}ds\nonumber\\
        &+\int_{0}^{t}e^{-(t-s)}\Vert u(s)\Vert_{1}ds\\
        \leqslant & e^{-t}\Vert u_{0} \Vert_{1}+C(R,l,m,N,\xi_{1},\xi_{2})t^{\frac{1}{2}}+(1-e^{-t})R\\
        \leqslant &R.
    \end{aligned}
    \end{equation}
    for sufficiently small $t$. By the similar argument, $\Vert Gu(t) \Vert_{\infty} \leqslant R$ for sufficiently small $t$. Thus $K(S_{R,T}) \subseteq S_{R,T}$ for sufficiently small $t$.

   \textbf{Claim 3}: $K$ is a contraction mapping.

   Let $u_{1},u_{2} \in S_{R,T}$. Then we obtain
   \begin{align*}
    &\Vert Ku_{1}(t)-Ku_{2}(t)\Vert_{1}  \\
    &\leqslant \int_{0}^{t} \Vert T(t-s)\nabla \cdot (\xi_{1}u_{1}(s)\nabla (\lambda_{1}I-\Delta)^{-1} f_{1}(u_{1}(s))-\xi_{1}u_{2}(s)\nabla (\lambda_{1}I-\Delta)^{-1} f_{1}(u_{2}(s)))\Vert_{1}ds \\
    & \ + \int_{0}^{t} \Vert T(t-s)\nabla \cdot (\xi_{2}u_{1}(s)\nabla (\lambda_{2}I-\Delta)^{-1} f_{2}(u_{1}(s))-\xi_{2}u_{2}(s)\nabla (\lambda_{2}I-\Delta)^{-1} f_{2}(u_{2}(s)))\Vert_{1}ds \\
    & \ + \int_{0}^{t}\Vert T(t-s)\left(u_{1}(s)-u_{2}(s)\right)\Vert_{1} ds\\
    &\leqslant c(N,\xi_{1}) \int_{0}^{t} (t-s)^{-\frac{1}{2}}e^{-(t-s)}\Vert u_{1}(s)-u_{2}(s) \Vert_{1} \Vert \nabla(\lambda_{1}-\Delta)^{-1}f_{1}(u_{1}(s))\Vert_{\infty} ds\\
    & \ + c(N,\xi_{1})\int_{0}^{t} (t-s)^{-\frac{1}{2}}e^{-(t-s)} \Vert u_{2}(s) \Vert_{1} \Vert \nabla(\lambda_{1}-\Delta)^{-1}(f_{1}(u_{1}(s))-f_{1}(u_{2}(s)))\Vert_{\infty} ds\\
    & \ + c(N,\xi_{2}) \int_{0}^{t} (t-s)^{-\frac{1}{2}}e^{-(t-s)}\Vert u_{1}(s)-u_{2}(s) \Vert_{1} \Vert \nabla(\lambda_{2}-\Delta)^{-1}f_{2}(u_{1}(s))\Vert_{\infty} ds\\
    & \ + c(N,\xi_{2})\int_{0}^{t} (t-s)^{-\frac{1}{2}}e^{-(t-s)} \Vert u_{2}(s) \Vert_{1} \Vert \nabla(\lambda_{2}-\Delta)^{-1}(f_{2}(u_{1}(s))-f_{2}(u_{2}(s)))\Vert_{\infty} ds\\
    & \ + \int_{0}^{t}e^{-(t-s)}\Vert G(t-s) \ast (u_{1}(s)-u_{2}(s))\Vert_{1} ds \\
    & \leqslant C(R,l,m,N,\xi_{1},\xi_{2},N,\lambda_{1},\lambda_{2},L_{1},L_{2}) \Vert u_{1}(s)-u_{2}(s) \Vert_{S_{R,T}} t^{\frac{1}{2}} \\
    & \ + (1-e^{-t})\Vert u_{1}(s)-u_{2}(s) \Vert_{S_{R,T}} ,
    \end{align*}
    where  $L_{1}$ and $L_{2}$ are Lipschitz constants of $f_{1}$ and $f_{2}$, respectively. By the similar argument, we have
   \begin{align*}
   \Vert Ku_{1}(t)-Ku_{2}(t)\Vert_{\infty}& \leqslant  C(R,l,m,N,\xi_{1},\xi_{2},N,\lambda_{1},\lambda_{2},L_{1},L_{2}) \Vert u_{1}(s)-u_{2}(s) \Vert_{S_{R,T}} t^{\frac{1}{2}}\\
    &+(1-e^{-t})\Vert u_{1}(s)-u_{2}(s) \Vert_{S_{R,T}}.
    \end{align*}
        For sufficiently small $t$, $K$ is a contraction mapping on $S_{R,T}$.

    \textbf{Claim 4}: $u(t,\cdot) \in X^{\beta}$.

    By the property of the analytic semigroup,\; $T(t)u_{0} \in X^{\beta}$, where $0<\beta < \frac{1}{2}.$ Also by using the fact
    \begin{equation*}
        T(t)\nabla \cdot h = T(\frac{t}{2})(T(\frac{t}{2})\nabla \cdot h) \in X^{\beta}. \quad \forall h\in L^{p}(\mathbb{R}^{N})^{N}, \ 1\leqslant p \leqslant \infty.
    \end{equation*}
    and closedness of $(\Delta-I)^{\beta}$, we obtain
    \begin{align*}
        &\int_{0}^{t} \Vert (\Delta-I)^{\beta}T(\frac{t-s}{2}) T(\frac{t-s}{2})\nabla \cdot (\xi_{1}u(s)\nabla (\lambda_{1}I-\Delta)^{-1} f_{1}(u(s)) \\ &-\xi_{2}u(s) \nabla (\lambda_{2}I-\Delta)^{-1} f_{2}(u(s))\Vert_{p} ds\\
        &\leqslant C_{\beta}\int_{0}^{t}(\frac{t-s}{2})^{-\beta}e^{-\frac{t-s}{2}}\Vert T(\frac{t-s}{2})\nabla \cdot (\xi_{1}u(s)\nabla (\lambda_{1}I-\Delta)^{-1} f_{1}(u(s))\\ & \  -\xi_{2}u(s)\nabla (\lambda_{2}I-\Delta)^{-1}f_{2}(u(s))\Vert_{p}ds \\
        &\leqslant C_{\beta}\int_{0}^{t} (t-s)^{-\beta-\frac{1}{2}}e^{-(t-s)}\Vert \xi_{1}u(s) \nabla (\lambda_{1}I-\Delta)^{-1} f_{1}(u(s))\\ & \ -\xi_{2}u(s)\nabla (\lambda_{2}I-\Delta)^{-1} f_{2}(u(s))\Vert_{p} ds\\
        &\leqslant C_{\beta,\lambda_{1},\lambda_{2},l,m,N,p,\xi_{1},\xi_{2},R}\Gamma(\frac{1}{2}-\beta).
    \end{align*}
    Also, we have
    \begin{align*}
        \int_{0}^{t} \Vert (\Delta-I)^{\beta} T(t-s)u(s)\Vert_{p} ds
         \leqslant C_{p,\beta,R} \int_{0}^{t} (t-s)^{-\beta}e^{-(t-s)} ds
         \leqslant C_{p,\beta,R}  \Gamma(1-\beta).
    \end{align*}
    Thus $u(t,\cdot) \in X^{\beta}$.

\end{proof}

We will now prove the H\"older regularity of the mild solution.

\begin{lemma}\label{lem7}
    Under the same assumption of Lemma \ref{lem6},\ $u(t,x) \in C^{\theta}((0,T);C^{v}(\mathbb{R}^{N}))$ where \ $0<\theta<1 $ , $0< v \ll 1$.
\end{lemma}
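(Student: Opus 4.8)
The plan is to obtain the spatial Hölder regularity from the fractional-power embedding of Lemma \ref{lem5}, and the temporal Hölder regularity from the analytic-semigroup estimates of Lemmas \ref{lem2} and \ref{lem3} applied to the mild formula for $u$ established in Lemma \ref{lem6}.

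For the spatial part, recall from Lemma \ref{lem6} that $u(t,\cdot) \in X^{\beta}$ for every $\beta < \frac{1}{2}$, where $X^{0}=L^{p}(\mathbb{R}^{N})$ with $p>N$. Since $p>N$ forces $\frac{N}{2p}<\frac{1}{2}$, I would fix $\alpha \in \left(\frac{N}{2p},\frac{1}{2}\right)$. Then the third embedding in Lemma \ref{lem5} gives $X^{\alpha}\subset C^{v}(\mathbb{R}^{N})$ for any $v$ with $0<v<2\alpha-\frac{N}{p}$, so it suffices to show that $t\mapsto u(t)$ lies in $C^{\theta}((0,T);X^{\alpha})$; the embedding then upgrades this to $C^{\theta}((0,T);C^{v}(\mathbb{R}^{N}))$.

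For the temporal part I would write $u(t)=T(t)u_{0}+\Phi(t)+\Psi(t)$, where $\Phi(t)=\int_{0}^{t}T(t-s)\nabla\cdot h(s)\,ds$ with $h(s)=\xi_{1}u(s)\nabla(\Delta-\lambda_{1}I)^{-1}f_{1}(u(s))-\xi_{2}u(s)\nabla(\Delta-\lambda_{2}I)^{-1}f_{2}(u(s))$, and $\Psi(t)=\int_{0}^{t}T(t-s)u(s)\,ds$; by Lemma \ref{lem6} and Lemma \ref{lem4} both $h$ and $u$ are bounded in the relevant norm on $[0,T]$. Fix $0<\tau<t<T$. The homogeneous term is controlled by $A^{\alpha}(T(t)-T(\tau))u_{0}=-\int_{\tau}^{t}A^{1+\alpha}T(s)u_{0}\,ds$ together with $\Vert A^{1+\alpha}T(s)\Vert\leqslant C s^{-1-\alpha}e^{-\delta s}$ from Lemma \ref{lem2}, which yields a Lipschitz (hence Hölder) bound on compact subintervals of $(0,T)$. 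For $\Phi$ I would split $\Phi(t)-\Phi(\tau)=\int_{\tau}^{t}T(t-s)\nabla\cdot h(s)\,ds+\int_{0}^{\tau}(T(t-\tau)-I)T(\tau-s)\nabla\cdot h(s)\,ds$. Factoring $A^{\alpha}T(\sigma)\nabla\cdot=\left(A^{\alpha}T(\tfrac{\sigma}{2})\right)\left(T(\tfrac{\sigma}{2})\nabla\cdot\right)$ and combining Lemma \ref{lem2} with Lemma \ref{lem3} gives $\Vert A^{\alpha}T(\sigma)\nabla\cdot h\Vert_{p}\leqslant C\sigma^{-\alpha-\frac{1}{2}}\Vert h\Vert_{p}$; the first piece is then bounded by $C(t-\tau)^{\frac{1}{2}-\alpha}$, which needs $\alpha<\frac{1}{2}$. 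For the second piece I would use the bound $\Vert(T(t-\tau)-I)y\Vert\leqslant C(t-\tau)^{\gamma}\Vert A^{\gamma}y\Vert$ from Lemma \ref{lem2} with $y=A^{\alpha}T(\tau-s)\nabla\cdot h(s)$, reducing the estimate to $\int_{0}^{\tau}(\tau-s)^{-\alpha-\gamma-\frac{1}{2}}\,ds$, which converges precisely when $\alpha+\gamma<\frac{1}{2}$ and produces a factor $(t-\tau)^{\gamma}$. The term $\Psi$ is handled identically but without the extra $\sigma^{-\frac{1}{2}}$ coming from $\nabla\cdot$, so it is strictly more regular. Choosing $\gamma\in\left(0,\frac{1}{2}-\alpha\right)$ and $\theta=\min\left\{\frac{1}{2}-\alpha,\gamma\right\}$ gives $u\in C^{\theta}((0,T);X^{\alpha})$ with $0<\theta<1$, and hence the claim with $0<v\ll1$.

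\textbf{Main obstacle.} The delicate point is the interaction between the first-order operator $\nabla\cdot$ and the fractional power $A^{\alpha}$: one cannot apply Lemma \ref{lem3} directly to $A^{\alpha}T(\sigma)\nabla\cdot$ but must split $T(\sigma)=T(\tfrac{\sigma}{2})T(\tfrac{\sigma}{2})$ so that Lemma \ref{lem3} controls $T(\tfrac{\sigma}{2})\nabla\cdot$ and Lemma \ref{lem2} controls $A^{\alpha}T(\tfrac{\sigma}{2})$, and then one must keep the three exponent constraints $\alpha>\frac{N}{2p}$ (for the embedding), $\alpha<\frac{1}{2}$ (for the first piece) and $\alpha+\gamma<\frac{1}{2}$ (for the second piece) simultaneously satisfiable — which is exactly what the hypothesis $p>N$ guarantees, since it forces $\frac{N}{2p}<\frac{1}{2}$.
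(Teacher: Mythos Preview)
Your proposal is correct and follows essentially the same route as the paper: the same three-term decomposition of the mild formula, the same reduction via the embedding $X^{\alpha}\subset C^{v}$ from Lemma \ref{lem5}, the same splitting of the Duhamel integral into a short ``tail'' piece and a piece to which $(T(h)-I)$ is applied, and crucially the same factorisation $T(\sigma)=T(\tfrac{\sigma}{2})T(\tfrac{\sigma}{2})$ to separate $A^{\alpha}$ from $\nabla\cdot$ under the constraint $\alpha+\gamma<\tfrac{1}{2}$ (the paper writes $\beta+\delta<\tfrac12$). The only cosmetic difference is your treatment of the homogeneous piece via $A^{\alpha}(T(t)-T(\tau))u_{0}=-\int_{\tau}^{t}A^{1+\alpha}T(s)u_{0}\,ds$, whereas the paper applies the $(T(h)-I)$ bound from Lemma \ref{lem2} directly; both give the required local H\"older estimate on $(0,T)$.
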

\begin{proof}
    \begin{align*}
    u(t)&=T(t)u_{0}\\
    &~~+\int_{0}^{t}T(t-s) \nabla \cdot (\xi_{1}u(s)\nabla (\Delta-\lambda_{1})^{-1} f_{1}(u(s))- \xi_{2}u(s)\nabla (\Delta-\lambda_{2})^{-1} f_{2}(u(s))) ds\\
    &~~ +\int_{0}^{t}T(t-s)u(s)ds\\
    &:=I_{0}(t)+I_{1}(t)+I_{2}(t).
    \end{align*}
    By Lemma \ref{lem6}, we know that $I_{0}(t),I_{1}(t),I_{2}(t) \in X^{\beta}$. Moreover using Lemma \ref{lem5}, it is sufficiently show that $(0,T) \ni t \rightarrow X^{\beta}$ is H\"older continuous.
    \begin{align*}
        \Vert I_{0}(t+h)-I_{0}(t) \Vert_{X^{\beta}}&=\Vert(\Delta-I)^{\beta}(T(h)-I) T(t)u_{0}\Vert_{p}\\
        &=\Vert(T(h)-I)(\Delta-I)^{\beta} T(t)u_{0}\Vert_{p}\\
        &\leqslant C(\beta)h^{1-\beta}\Vert (\Delta-I)T(t)u_{0}\Vert_{p}\\
        &\leqslant C(\beta)h^{1-\beta}t^{-1}e^{-t}\Vert u_{0} \Vert_{p}.
    \end{align*}
    where $0<\beta<1$,$N<p\leqslant \infty$, and $t+h<T$. Let $\delta>0$ be such that $\beta+\delta <\frac{1}{2}$. Then we have
    \begin{align*}
        &\Vert I_{1}(t+h)-I_{1}(t)\Vert_{X^{\beta}} \\
        &=\int_{0}^{t}\Vert (T(h)-I)T(\frac{t-s}{2})T(\frac{t-s}{2}) \nabla \cdot (\xi_{1}u(s)\nabla  (\Delta-\lambda_{1})^{-1} f_{1}(u(s)) \\ & \quad -\xi_{2}u(s)\nabla (\Delta-\lambda_{2})^{-1} f_{2}(u(s)))\Vert_{X^{\beta}} ds\\
        & \ +\int_{t}^{t+h}\Vert T(t+h-s)\nabla \cdot (\xi_{1}u(s)\nabla (\Delta-\lambda_{1})^{-1} f_{1}(u(s))-\xi_{2}u(s)\nabla (\Delta-\lambda_{2})^{-1} f_{2}(u(s)))\Vert_{X^{\beta}} ds\\
        &\leqslant \int_{0}^{t}\Vert (T(h)-I)(\Delta-I)^{\beta}T(\frac{t-s}{2})T(\frac{t-s}{2})\nabla \cdot (\xi_{1}u(s)\nabla (\Delta-\lambda_{1})^{-1} f_{1}(u(s))\\ & \quad -\xi_{2}u(s)\nabla (\Delta-\lambda_{2})^{-1}  f_{2}(u(s)))\Vert_{p} ds\\
        & \ +\int_{t}^{t+h} \Vert (\Delta-I)^{\beta}T(\frac{t+h-s}{2})T(\frac{t+h-s}{2})\nabla \cdot (\xi_{1}u(s)\nabla (\Delta-\lambda_{1})^{-1} f_{1}(u(s))\\ & \quad -\xi_{2}u(s)\nabla (\Delta-\lambda_{2})^{-1}  f_{2}(u(s))\Vert_{p} ds\\
        &\leqslant C(R,l,m,\lambda_{1},\lambda{2},\xi_{1},\xi_{2},p,N,\delta,\beta)h^{\delta}\int_{0}^{t} \left(\frac{t-s}{2}\right)^{-\delta-\beta-\frac{1}{2}}e^{-(t-s)}ds\\
        & \ +C(R,l,m,\lambda_{1},\lambda{2},\xi_{1},\xi_{2},p,N,\beta)\int_{t}^{t+h}\left(\frac{t+h-s}{2}\right)^{-\beta-\frac{1}{2}}e^{-(t+h-s)}ds\\
        &\leqslant C(R,l,m,\lambda_{1},\lambda{2},\xi_{1},\xi_{2},p,N,\alpha)h^{\delta}\Gamma(\frac{1}{2}-\delta-\beta)+ C(R,l,m,\lambda_{1},\lambda{2},\xi_{1},\xi_{2},p,N,\alpha)h^{\frac{1}{2}-\beta}
    \end{align*}
     and
    \begin{align*}
        &\Vert I_{2}(t+h)-I_{2}(t)\Vert_{X^{\beta}}\\
        &\leqslant \int_{0}^{t} \Vert (\Delta-I)^{\beta}(T(h)-I)T(t-s)u(s)\Vert_{p}ds+\int_{t}^{t+h}\Vert (\Delta-I)^{\beta}T(t+h-s)u(s)\Vert_{p}ds\\
        &\leqslant C(\delta,\beta,p,R)h^{\delta}\int_{0}^{t}(t-s)^{-\beta-\delta}e^{-(t-s)}ds+C(\beta,p,R)\int_{t}^{t+h}(t+h-s)^{-\beta}e^{-(t+h-s)}ds\\
        &\leqslant C(\delta,\beta,p,R)h^{\delta}\Gamma(1-\beta-\delta)+C(\beta,p,R)h^{1-\beta}.
    \end{align*}
    This complete the proof of Lemma \ref{lem7}.
\end{proof}

We briefly introduce the parabolic Cauchy problem such that
\begin{align*}
    &Lu=f(x,t) \quad \text{in} \quad \mathbb{R}^{N} \times (0,T],\\
    &u(x,0)=\phi(x) \quad \text{on} \quad \mathbb{R}^{N} ,
\end{align*}
where $L$ is the uniformly parabolic operator defined by (\ref{1}). If $f(x,t),\phi(x)$ are exponentially bounded (for more Details conditions,see \cite[Ch1]{fri}) and
coefficients of $L$ satisfying the Lemma \ref{lem1}, then we construct solution by the following form
\begin{equation*}
    u(x,t)=\int_{\mathbb{R}^{N}} Z(x,\xi,t,0)\phi(\xi)d\xi -\int_{0}^{t}\int_{\mathbb{R}^{N}}Z(x,\xi,t,\tau)f(\xi,\tau)d\xi d\tau.
\end{equation*}
where $Z$ is the parabolic fundamental solution (see \cite{fri,rus} for more information of the parabolic fundamental solution).

\begin{lemma}\label{lem8}
Assume that the nonnegative initial data,$u_{0}$, of the problem (1.1) belongs to $ L^{1}(\mathbb{R}^{N})\cap BUC(\mathbb{R}^{N})$ with $N\geqslant2$.  then (1.1) has a unique nonnegative local classic solution $u \in C^{1,2}((0,T_{max})\times \mathbb{R}^{N}) \cap C^{1}((0,T_{max};L^{p}(\mathbb{R}^{N}))\cap C([0,T_{max});L^{1}(\mathbb{R}^{N}) \cap BUC(\mathbb{R}^{N}) )\cap  C^{\theta}((0,T_{max});C_{unif}^{\nu}(\mathbb{R}^{N}))$ and $\lim_{t \rightarrow 0^{+}} \Vert u(t,x)-u_{0}(x) \Vert_{X}=0$
for $0 < \theta,\nu <1$.
 If $T_{max} <\infty,$ then $\lim_{t\rightarrow T_{max}^{-}}\Vert u(t)\Vert_{\infty}=\infty$.\;In addition $\Vert u(x,t)\Vert_{1}= \Vert u_{0}(x) \Vert_{1}$.
\end{lemma}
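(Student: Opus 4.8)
The plan is to promote the mild solution produced in Lemma \ref{lem6}, whose H\"older regularity is recorded in Lemma \ref{lem7}, to a classical solution through a linearization of the first equation followed by the parabolic fundamental--solution theory of Lemma \ref{lem1}. First I would solve the two elliptic equations by setting $v=(\lambda_{1}I-\Delta)^{-1}f_{1}(u)$ and $w=(\lambda_{2}I-\Delta)^{-1}f_{2}(u)$, and then substitute the relations $\Delta v=\lambda_{1}v-f_{1}(u)$ and $\Delta w=\lambda_{2}w-f_{2}(u)$ into the expanded form of the first equation to obtain the non-divergence linear equation
\begin{equation*}
    u_{t}=\Delta u+b(x,t)\cdot\nabla u+c(x,t)u,
\end{equation*}
where $b=-\xi_{1}\nabla v+\xi_{2}\nabla w$ and $c=\xi_{1}f_{1}(u)-\xi_{2}f_{2}(u)-\xi_{1}\lambda_{1}v+\xi_{2}\lambda_{2}w$. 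The whole argument then reduces to treating $b$ and $c$ as \emph{given} coefficients and reading off the regularity of $u$ from the resulting linear problem.

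The key step, and the one I expect to be the main obstacle, is to verify that $b$ and $c$ belong to $C^{\alpha,\frac{\alpha}{2}}(\mathbb{R}^{N}\times[0,T])$ so that Lemma \ref{lem1} becomes applicable. Since $u\in C^{\theta}((0,T);C^{v}(\mathbb{R}^{N}))$ by Lemma \ref{lem7} and $f_{1},f_{2}\in C^{1}(\mathbb{R})$, the compositions $f_{1}(u),f_{2}(u)$ inherit the same space--time H\"older regularity; the Bessel--potential representation together with elliptic (Schauder) estimates should then lift $v,w$ by two spatial derivatives, so that $\nabla v,\nabla w$ and the zeroth-order terms are H\"older continuous jointly in $x$ and $t$. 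Tracking the modulus of continuity in time is the delicate point, because $v$ and $w$ are defined through the nonlocal operators $(\lambda_{i}I-\Delta)^{-1}$ acting on a function that is only H\"older in time. Once $b,c\in C^{\alpha,\frac{\alpha}{2}}$ is established, the operator $L=\Delta+b\cdot\nabla+c-\partial_{t}$ is uniformly parabolic with H\"older coefficients, so Lemma \ref{lem1} furnishes a fundamental solution $Z$ together with the representation
\begin{equation*}
    u(x,t)=\int_{\mathbb{R}^{N}}Z(x,\xi,t,0)u_{0}(\xi)\,d\xi .
\end{equation*}
The pointwise derivative bounds of Lemma \ref{lem1} then yield $u\in C^{1,2}((0,T_{max})\times\mathbb{R}^{N})$ and, combined with the abstract estimates of Lemmas \ref{lem2}--\ref{lem5}, the remaining regularity classes $C^{1}((0,T_{max});L^{p}(\mathbb{R}^{N}))$ and $C^{\theta}((0,T_{max});C_{unif}^{v}(\mathbb{R}^{N}))$.

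Nonnegativity would follow immediately from the representation formula, since the fundamental solution of a second-order parabolic operator is nonnegative and $u_{0}\geqslant 0$; alternatively one may invoke the parabolic maximum principle for the linear equation above. The attainment of the initial data, $\lim_{t\to0^{+}}\Vert u(t,\cdot)-u_{0}\Vert_{X}=0$, is already contained in the continuity $u\in C([0,T];L^{1}(\mathbb{R}^{N})\cap BUC(\mathbb{R}^{N}))$ from Lemma \ref{lem6}. For uniqueness I would note that any classical solution is in particular a mild solution, and the fixed-point argument of Lemma \ref{lem6} identifies the mild solution uniquely in $S_{R,T}$, so the two constructions coincide.

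It remains to establish the blow-up alternative and mass conservation. The local existence time $T=T(R)$ in Lemma \ref{lem6} depends only on $R>\Vert u_{0}\Vert_{1}+\Vert u_{0}\Vert_{\infty}$; thus, by the standard continuation argument, the solution can be prolonged as long as $\Vert u(t)\Vert_{1}+\Vert u(t)\Vert_{\infty}$ stays finite, which forces $\lim_{t\to T_{max}^{-}}\Vert u(t)\Vert_{\infty}=\infty$ whenever $T_{max}<\infty$. For the mass identity, I would integrate the mild formula of Lemma \ref{lem6}: writing $T(t)=e^{-t}e^{t\Delta}$ and using that the heat semigroup preserves mass while the divergence term integrates to zero, the quantity $M(t):=\int_{\mathbb{R}^{N}}u(t)\,dx$ satisfies
\begin{equation*}
    M(t)=e^{-t}M(0)+\int_{0}^{t}e^{-(t-s)}M(s)\,ds ,
\end{equation*}
and differentiating gives $M'(t)=0$, hence $M(t)=M(0)$. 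Combined with the already proved nonnegativity, this yields $\Vert u(t)\Vert_{1}=\Vert u_{0}\Vert_{1}$.
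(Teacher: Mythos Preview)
Your overall strategy matches the paper's: linearize by freezing $v,w$ obtained from the mild solution, invoke the fundamental-solution theory of Lemma~\ref{lem1}, and then identify the resulting classical solution with the mild solution. Two points, however, are handled more carefully in the paper than in your sketch.

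First, you propose to apply Lemma~\ref{lem1} on $\mathbb{R}^{N}\times[0,T]$ and write $u(x,t)=\int_{\mathbb{R}^{N}}Z(x,\xi,t,0)u_{0}(\xi)\,d\xi$. But Lemma~\ref{lem7} only gives $u\in C^{\theta}((0,T);C^{v}(\mathbb{R}^{N}))$ on the \emph{open} time interval, so the coefficients $b,c$ are not known to lie in $C^{\alpha,\alpha/2}(\mathbb{R}^{N}\times[0,T])$ down to $t=0$. The paper avoids this by fixing an arbitrary $t_{1}\in(0,T_{max})$, taking $u(t_{1})$ as initial datum, and working on $[0,T_{max}-t_{1}-\epsilon]$ where the coefficients are uniformly H\"older; since $t_{1},\epsilon$ are arbitrary this covers all of $(0,T_{max})$. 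Without such a shift your application of Lemma~\ref{lem1} is not justified.

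Second, the identification step deserves more than the sentence ``the two constructions coincide.'' The function produced by the fundamental solution is a classical solution of the \emph{linear} problem $Lu=0$ with coefficients built from the mild solution $u$; one must still show it equals $u$ so that the nonlinear equation is actually satisfied. The paper does this explicitly: it uses Pazy's result to recognize the classical solution $\tilde{u}$ as a mild solution of the shifted problem, rewrites the integral equation in divergence form, and compares with $u(t+t_{1})$ via a Gronwall argument in $L^{\infty}$. Your uniqueness remark (``any classical solution is a mild solution'') is the right idea but needs this intermediate computation to close. The remaining items---nonnegativity via the maximum principle, the blow-up alternative from the dependence $T=T(R)$, and mass conservation---are handled essentially as you describe; the paper simply integrates the PDE for $\|u\|_{1}$ rather than your ODE-for-$M(t)$ argument, but both work.
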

\begin{proof}
    Let $0<t_{1}<T_{max}$ be fixed and consider the following parabolic Cauchy problem:
    \begin{equation}\label{3}
    \begin{aligned}
      &\tilde{u_{t}}=(\Delta-I)\tilde{u}+F(t,\tilde{u})\quad \text{in} \quad \mathbb{R}^{N}\times (0,T], \\
      &\tilde{u}(x,0)=u_{1}(x):=u(t_{1}) \quad \text{on} \quad \mathbb{R}^{N}.
    \end{aligned}
    \end{equation}
    where $F(t,\tilde{u})=-\xi_{1}\nabla v(t+t_{1})\nabla \tilde{u}(t)+\xi_{2}\nabla w(t+t_{1})\nabla \tilde{u}(t)+\tilde{u}(t)+\xi_{2}\Delta w(t+t_{1})\tilde{u}(t)-\xi_{1}\Delta v(t+t_{1})\tilde{u}(t).$ By Lemma \ref{lem6} and \ref{lem7} and assumption of the $f_{1},f_{2}$, we obtain that
    \begin{align*}
    & u(t+t_{1},\cdot),f_{1}(u(t+t_{1},\cdot)),f_{2}(u(t+t_{1},\cdot)),v(t+t_{1},\cdot),w(t+t_{1},\cdot),\frac{\partial w(t+t_{1},\cdot)}{\partial x_{i}},\\ &\frac{\partial v(t+t_{1},\cdot)}{\partial x_{i}},\frac{\partial^{2}v(t+t_{1},\cdot)}{\partial x_{i}\partial x_{j}},\frac{\partial^{2}w(t+t_{1},\cdot)}{\partial x_{i}\partial x_{j}} \in C^{\frac{\alpha}{2},\alpha}([0,T],\mathbb{R}^{N}),
    \end{align*}
     for $t\geqslant 0$, $0<\alpha<1$, $i,j=1,2,\cdot \cdot \cdot ,N$ and $T:=T_{max}-t_{1}-\epsilon$, where $0<\epsilon<T_{max}-t_{1}$. Thus by Lemma \ref{lem1} and \cite[Theorem 16 Ch1]{fri}, we obtain the unique classical solution for \eqref{3},
     \begin{equation*}
        \tilde{u}(x,t)=\int_{\mathbb{R}^{N}} Z(x,\xi,t,0)u_{1}(\xi)d\xi,
     \end{equation*}
     where $Z$ is the parabolic fundamental solution. By \cite[Theorem 11 Ch1]{fri}, $\lim_{t \rightarrow 0}\Vert \tilde{u}(t)-u_{1}\Vert_{\infty}=0$. By using the Lemma \ref{lem1}, the mappings
     \begin{align}
        t \mapsto \tilde{u}(t) \in L^{p} \cap BUC(\mathbb{R}^{N}) \quad \text{for} \quad T>t>0, 1\leqslant p \leqslant \infty, \label{4} \\
         t \mapsto \frac{\partial \tilde{u}(t)}{\partial x_{i}} \in L^{p}\cap BUC(\mathbb{R}^{N}) \quad \text{for} \quad T>t>0, 1\leqslant p \leqslant \infty, \label{5}
     \end{align}
    are locally H\"older continuous in time. By using Lemma \ref{lem6}, \ref{lem7} and \eqref{4},\eqref{5}, $F(t,\tilde{u})$ is locally H\"older continuous for $T>t>0$. By using the Lemma 1.1, we obtain
     \begin{equation*}
        F(t,\tilde{u}) \in L^{1}(0,T;L^{p}(\mathbb{R}^{N})),
     \end{equation*}
     for $1\leqslant p \leqslant\infty.$ Thus by the \cite[Corollary 3.3 p113]{pazy},
     $ \tilde{u}(t) $ is also a mild solution of \eqref{3} and then satisfies the following integral equation,
     \begin{align*}
        \tilde{u}(t)=T(t)u_{1}+&\int_{0}^{t}T(t-s)(-\xi_{1} \nabla v(s+t_{1})\nabla \tilde{u}(s) +\xi_{2}\nabla w(s+t_{1})\nabla \tilde{u}(s)+\\&\tilde{u}(s)+\xi_{2} \Delta w(s+t_{1})\tilde{u}(s)-\xi_{1}\Delta v(s+t_{1})\tilde{u}(s))ds
     \end{align*}
     for $0\leqslant t < T$.
     By using the fact
     \begin{align*}
        \nabla \tilde{u}(s)\nabla v(s+t_{1})=&\nabla \cdot (\tilde{u}(s)\nabla v(s+t_{1}))-\tilde{u}(s)\Delta v(s+t_{1})\\
        =&\nabla \cdot (\tilde{u}(s)\nabla v(s+t_{1}))-\tilde{u}(s)(\lambda_{1}v(s+t_{1})-f_{1}(u(s+t_{1})))
     \end{align*}
     and
     \begin{align*}
        \nabla \tilde{u}(s)\nabla w(s+t_{1})=&\nabla \cdot (\tilde{u}(s)\nabla w(s+t_{1}))-\tilde{u}(s)\Delta w(s+t_{1})\\
        =&\nabla \cdot (\tilde{u}(s)\nabla w(s+t_{1}))-\tilde{u}(s)(\lambda_{2}w(s+t_{1})-f_{2}(u(s+t_{1}))),
     \end{align*}
     we obtain
     \begin{align*}
        \tilde{u}(t)=&T(t)u_{1}+\int_{0}^{t} T(t-s)\nabla \cdot (-\xi_{1} \nabla \cdot (\tilde{u}(s)\nabla v(s+t_{1}))\\& \quad +\xi_{2}\nabla \cdot (\tilde{u}(s)\nabla w(s+t_{1}))) ds \nonumber
         +\int_{0}^{t} T(t-s)\tilde{u}(s) ds.
     \end{align*}
     On the other hand, we have that
     \begin{align*}
        u(t+t_{1})=&T(t)u_{1}+\int_{0}^{t}T(t-s)(-\xi_{1} \nabla \cdot  (u(s+t_{1})\nabla v(s+t_{1})\\ & \ + \xi_{2}\nabla \cdot (u(s+t_{1})\nabla w(s+t_{1})))ds
        +\int_{0}^{t}T(t-s)u(s+t_{1})ds
     \end{align*}
     for $0<t<T_{max}-t_{1}$ .
     Thus for $0<t<T_{\epsilon}<T=T_{max}-t_{1}-\epsilon$, we obtain
     \begin{align*}
        \Vert & \tilde{u}(t)-u(t+t_{1}) \Vert_{\infty}\\
        &\leqslant \xi_{1}\int_{0}^{t} \Vert T(t-s) \nabla \cdot ((u(s+t_{1})-\tilde{u}(s))\nabla v(s+t_{1}))\Vert_{\infty} ds \\
        & \quad+\xi_{2}\int_{0}^{t} \Vert T(t-s) \nabla \cdot ((u(s+t_{1})-\tilde{u}(s))\nabla w(s+t_{1}))\Vert_{\infty} ds\\
        & \quad+\int_{0}^{t}\Vert T(t-s)(u(s+t_{1})-\tilde{u}(s))\Vert_{\infty} ds \\
        & \leqslant \xi_{1} \sup_{0\leqslant t \leqslant T_{\epsilon}}\Vert \nabla v(t+t_{1}) \Vert _{\infty} \int_{0}^{t} (t-s)^{-\frac{1}{2}}e^{-(t-s)}\Vert u(s+t_{1})-\tilde{u}(s)\Vert_{\infty} ds \\
        & \quad+ \xi_{2} \sup_{0\leqslant t \leqslant T_{\epsilon}}\Vert \nabla w(t+t_{1}) \Vert _{\infty} \int_{0}^{t} (t-s)^{-\frac{1}{2}}e^{-(t-s)}\Vert u(s+t_{1})-\tilde{u}(s)\Vert_{\infty} ds\\
        &\quad +\sqrt{T}\int_{0}^{t} (t-s)^{-\frac{1}{2}}e^{-(t-s)}\Vert u(s+t_{1})-\tilde{u}(s)\Vert_{\infty} ds
     \end{align*}
     By  Gronwall's inequality, we conclude that
     \begin{equation*}
        u(t+t_{1})=\tilde{u}(t) \quad 0\leqslant t \leqslant T_{\epsilon}.
     \end{equation*}
     Since $\epsilon,t_{1}$ are arbitrary positive constants, $u$ is a classical solution on $(0,T_{max})$.
     Since $u_{0} \geqslant 0$, by the comparison principle for Parabolic equation (see \cite[Lemma 5 p43]{fri} or \cite[Corollary 4.2.1 p136]{nonlinear}), we obtain $u(x,t)\geqslant 0$.
     Since $ v=(\lambda_{1}I-\Delta)^{-1}f_{1}(u), w=(\lambda_{2}I-\Delta)^{-1} f_{2}(u)$, $v$ and $w$ are also nonnegative.

     Next, we will show the uniqueness of the solution. Suppose that for given nonnegative initial data $u_{0} \in L^{1}(\mathbb{R}^{N})\cap BUC(\mathbb{R}^{N})$, \ $(u_{1},v_{1},w_{1})$ and $(u_{2},v_{2},w_{2})$ are two classical solutions of (1.1)
     on $\mathbb{R}^{N} \times [0,T)$ satisfying of the Lemma \ref{lem8}. Let $0<t_{1}<T'<T$ be fixed. Then $\sup_{0\leqslant t \leqslant T'}(\Vert u_{1}(t,\cdot) \Vert_{\infty}+\Vert u_{2}(t,\cdot)\Vert_{\infty})<\infty$. Consider the following Cauchy problem
     \begin{align*}
        &\tilde{u_{t}}=(\Delta-I)\tilde{u}+F(t,\tilde{u})\quad \text{in} \quad \mathbb{R}^{N}\times (0,T'],\nonumber \\
       &\tilde{u}(x,0)=u_{i}(t_{1},\cdot) \quad \text{on} \quad \mathbb{R}^{N},
     \end{align*}
     where $F(t,\tilde{u})=-\xi_{1}\nabla v_{i}(t+t_{1},x)\nabla \tilde{u}(t,x)+\xi_{2}\nabla w_{i}(t+t_{1},x)\nabla \tilde{u}(t,x)+\tilde{u}(t,x)+\xi_{2}\Delta w_{i}(t+t_{1},x)\tilde{u}(t,x)-\xi_{1}\Delta v_{i}(t+t_{1},x)\tilde{u}(t,x) $ for $i=1,2$. By  the uniqueness of the Cauchy problem, classical solutions $(u_{i},v_{i},w_{i})$ satisfying of the Lemma \ref{lem8} becomes the following mild solutions for $ t_{1}\leqslant t \leqslant T'$,
     \begin{align*}
        u_{i}(t)&=T(t-t_{1})u_{i}(t_{1})+\int_{t_{1}}^{t}T(t-s)(-\xi_{1}\nabla \cdot (u_{i}(s)\nabla v_{i}(s))+\xi_{2}\nabla \cdot (u_{i}(s)\nabla w_{i}(s))ds \\&+\int_{t_{1}}^{t}T(t-s)u_{i}(s)ds,
     \end{align*}
     for $i=1,2$. Then it follows that
     \begin{align*}
        \Vert & u_{1}(t)-u_{2}(t) \Vert_{\infty}\\ &\leqslant \Vert u_{1}(t_{1})-u_{2}(t_{1})\Vert_{\infty}+c_{1}\xi_{1}\int_{t_{1}}^{t}(t-s)^{-\frac{1}{2}}e^{-(t-s)}\Vert u_{1}(s)\nabla v_{1}(s)-u_{2}(s)\nabla v_{2}(s) \Vert_{\infty} ds \\
        &\quad +c_{2}\xi_{2}\int_{t_{1}}^{t}(t-s)^{-\frac{1}{2}}e^{-(t-s)}\Vert u_{1}(s)\nabla w_{1}(s)-u_{2}(s)\nabla w_{2}(s) \Vert_{\infty} ds\\ & \quad+\int_{t_{1}}^{t}e^{-(t-s)}\Vert u_{1}(s)-u_{2}(s)\Vert_{\infty} ds\\
        &\leqslant \Vert u_{1}(t_{1})-u_{2}(t_{1})\Vert_{\infty}  \\
        &\quad+c_{1}\xi_{1}\int_{t_{1}}^{t}(t-s)^{-\frac{1}{2}}e^{-(t-s)}\Vert u_{1}(s)-u_{2}(s) \Vert_{\infty}\Vert \nabla v_{1}(s)\Vert_{\infty} ds \\
        &\quad+c_{1}\xi_{1}\int_{t_{1}}^{t}(t-s)^{-\frac{1}{2}}e^{-(t-s)} \Vert \nabla v_{1}(s)-\nabla v_{2}(s) \Vert_{\infty} \Vert u_{2}(s)\Vert_{\infty} ds \\
        &\quad+c_{2}\xi_{2}\int_{t_{1}}^{t}(t-s)^{-\frac{1}{2}}e^{-(t-s)}\Vert u_{1}(s)-u_{2}(s) \Vert_{\infty}\Vert \nabla w_{1}(s)\Vert_{\infty} ds\\
        &\quad+c_{2}\xi_{2}\int_{t_{1}}^{t}(t-s)^{-\frac{1}{2}}e^{-(t-s)} \Vert \nabla w_{1}(s)-\nabla w_{2}(s) \Vert_{\infty} \Vert u_{2}(s)\Vert_{\infty} ds\\
        &\quad+\int_{t_{1}}^{t}e^{-(t-s)}\Vert u_{1}(s)-u_{2}(s)\Vert_{\infty} ds\\
        & \leqslant \Vert u_{1}(t_{1})-u_{2}(t_{1})\Vert_{\infty} \\
        &\quad+c_{1}\xi_{1} \sup_{0\leqslant \tau \leqslant T'} \Vert \nabla v_{1}(\tau) \Vert_{\infty} \int_{t_{1}}^{t}(t-s)^{-\frac{1}{2}}e^{-(t-s)}\Vert u_{1}(s)-u_{2}(s) \Vert_{\infty} ds\\
        &\quad+c_{1}\xi_{1} \sup_{0\leqslant \tau \leqslant T'} \Vert u_{2}(s)\Vert_{\infty} \int_{t_{1}}^{t}(t-s)^{-\frac{1}{2}}e^{-(t-s)} \Vert f_{1}(u_{1}(s))-f_{1}(u_{2}(s))\Vert_{\infty} ds\\
        &\quad+c_{2}\xi_{2} \sup_{0\leqslant \tau \leqslant T'} \Vert \nabla w_{1}(\tau) \Vert_{\infty} \int_{t_{1}}^{t}(t-s)^{-\frac{1}{2}}e^{-(t-s)}\Vert u_{1}(s)-u_{2}(s) \Vert_{\infty} ds\\
        &\quad+c_{2}\xi_{2} \sup_{0\leqslant \tau \leqslant T'} \Vert u_{2}(s)\Vert_{\infty} \int_{t_{1}}^{t}(t-s)^{-\frac{1}{2}}e^{-(t-s)} \Vert f_{2}(u_{1}(s))-f_{2}(u_{2}(s))\Vert_{\infty} ds\\
        &\quad+\sqrt{T'} \int_{t_{1}}^{t}(t-s)^{-\frac{1}{2}}e^{-(t-s)}\Vert u_{1}(s)-u_{2}(s) \Vert_{\infty} ds\\
        & \leqslant \Vert u_{1}(t_{1})-u_{2}(t_{1})\Vert_{\infty} \\
        &\quad+M\int_{t_{1}}^{t}(t-s)^{-\frac{1}{2}}e^{-(t-s)}\Vert u_{1}(s)-u_{2}(s) \Vert_{\infty} ds,
     \end{align*}
     where $M=c_{1}\xi_{1}\sup_{0\leqslant \tau \leqslant T'} \Vert \nabla v_{1}(\tau) \Vert_{\infty} +c_{2}\xi_{2}\sup_{0\leqslant \tau \leqslant T'} \Vert \nabla w_{1}(\tau) \Vert_{\infty}+
     (L_{1}c_{1}\xi_{1}+L_{2}c_{2}\xi_{2}) \sup_{0\leqslant \tau \leqslant T'} \Vert u_{2}(s)\Vert_{\infty} <\infty $. Let $t_{1} \rightarrow 0$, we have
     \begin{equation*}
        \Vert u_{1}(t)-u_{2}(t)\Vert_{\infty} \leqslant M \int_{0}^{t}(t-s)^{-\frac{1}{2}}e^{-(t-s)}\Vert u_{1}(s)-u_{2}(s) \Vert_{\infty} ds.
     \end{equation*}
     Using Gronwall's inequality, we obtain $u_{1}(t)=u_{2}(t)$ for $0 \leqslant t \leqslant T'$.
     Since $T'<T$ was arbitrary, $u_{1}(t)=u_{2}(t)$ for $0 \leqslant t \leqslant T $.

     Next, by integration by (1.1) over the $\mathbb{R}^{N}$, we obtain the conservation law, $\Vert u(x,t) \Vert _{1} =\Vert u_{0}(x) \Vert_{1}$.

\end{proof}

\begin{lemma}\label{lem9}
        For $N \geqslant 2 $, assume that $0\leqslant f_{1}(s) \leqslant c_{1}s^{l},f_{2}(s)=s^{m}$, where $l > \frac{2}{N},m \geqslant 1$ and $l <m$. Then the solution of (1.1) has following property:
        \begin{align}
            &\Vert u \Vert_{p} \leqslant C_{1} \quad \text{for} \ 1\leqslant p < \infty, \label{12} \\
            &\Vert v \Vert_{p} \leqslant C_{2} \quad \text{for} \ \frac{1}{l} \leqslant p \leqslant \infty ,\ \text{when} \ l<1,\label{13} \\
            &\Vert v \Vert_{p} \leqslant C_{3} \quad \text{for} \ 1\leqslant p \leqslant \infty ,\ \text{when} \ l\geqslant 1, \label{14}\\
            &\Vert w \Vert_{p} \leqslant C_{4} \quad \text{for} \ 1\leqslant p \leqslant \infty,\label{15}\\
            & \Vert \nabla v \Vert_{\infty} \leqslant C_{5}, \quad \Vert \nabla w \Vert_{\infty} \leqslant C_{6}, \label{16}
        \end{align}
        where $C_{1},C_{2},C_{3},C_{4},C_{5},C_{6}$ are for some positive constants depending on \\ $\Vert u_{0} \Vert_{1}, N, m,l,\lambda_{1},\lambda_{2},c_{1},\xi_{1},\xi_{2},p$.
\end{lemma}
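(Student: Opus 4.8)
The plan is to run an $L^{p}$ energy argument on the $u$-equation, exploiting that the repulsive term (through $f_{2}(u)=u^{m}$) generates a genuinely dissipative contribution which dominates the attractive one precisely because $l<m$. Fix $p\geqslant 1$, test the first equation of (1.1) against $pu^{p-1}$, integrate by parts, and substitute $\Delta v=\lambda_{1}v-f_{1}(u)$ and $\Delta w=\lambda_{2}w-f_{2}(u)$ from the two elliptic equations. This yields
\begin{align*}
\frac{d}{dt}\int u^{p}+\frac{4(p-1)}{p}\int|\nabla u^{p/2}|^{2}
&=(p-1)\xi_{1}\int u^{p}f_{1}(u)-(p-1)\xi_{2}\int u^{p}f_{2}(u)\\
&\quad -(p-1)\xi_{1}\lambda_{1}\int u^{p}v+(p-1)\xi_{2}\lambda_{2}\int u^{p}w.
\end{align*}
Since $v,w\geqslant 0$ by Lemma \ref{lem8}, the term $-(p-1)\xi_{1}\lambda_{1}\int u^{p}v\leqslant 0$ is discarded, while $-(p-1)\xi_{2}\int u^{p}f_{2}(u)=-(p-1)\xi_{2}\int u^{p+m}$ is the crucial dissipative term (this is exactly why the hypothesis requires the equality $f_{2}(s)=s^{m}$, so that the repulsion can be bounded \emph{below}). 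Using $f_{1}(u)\leqslant c_{1}u^{l}$ and the interpolation $u^{p+l}\leqslant\varepsilon u^{p+m}+C_{\varepsilon}u^{p}$, valid because $0<l<m$, the attractive term is absorbed into the dissipation, leaving the genuinely problematic contribution $+(p-1)\xi_{2}\lambda_{2}\int u^{p}w$.

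The heart of the argument is to control $\int u^{p}w$. Here I would use the elliptic smoothing of $w=(\lambda_{2}-\Delta)^{-1}u^{m}$: by Young's inequality $\int u^{p}w\leqslant\varepsilon\int u^{p+m}+C_{\varepsilon}\|w\|_{(p+m)/m}^{(p+m)/m}$, and the gain of integrability of the Bessel potential gives $\|w\|_{(p+m)/m}\leqslant C\|u^{m}\|_{r}=C\|u\|_{mr}^{m}$ with $\tfrac1r=\tfrac{m}{p+m}+\tfrac2N$, so that $mr<\tfrac{mN}{2}$ for every finite $p$ (with the analogous subcritical gain when $N=2$). Thus the $w$-term reduces to a \emph{fixed}, lower-order norm $\|u\|_{mr}$, which is estimated by interpolation between $\|u\|_{1}$ (controlled by the mass conservation $\|u\|_{1}=\|u_{0}\|_{1}$ of Lemma \ref{lem8}) and an $L^{p}$ bound already secured at a previous stage of the iteration. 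Choosing $\varepsilon$ small to absorb everything into the dissipation, I arrive at
\begin{equation*}
\frac{d}{dt}\int u^{p}+c\int u^{p+m}\leqslant C\int u^{p}+C'.
\end{equation*}

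To close this into a uniform-in-time bound I pass to a scalar ODE inequality. Either dissipation suffices: interpolating the Lebesgue norms $\|u\|_{p}\leqslant\|u\|_{p+m}^{\theta}\|u\|_{1}^{1-\theta}$ and using mass conservation yields $\int u^{p+m}\geqslant c(\int u^{p})^{1+\delta}$ with $\delta>0$, while alternatively the Gagliardo--Nirenberg inequality of Lemma \ref{lem11} (with $r+m-1=p$, $q_{1}=1$) converts the diffusion dissipation $\int|\nabla u^{p/2}|^{2}$ into the same superlinear control of $\int u^{p}$. In either case Young's inequality for scalars (where the constants \emph{do} integrate) turns $C\int u^{p}$ into $\tfrac{c}{2}(\int u^{p})^{1+\delta}+C''$, so that $y(t):=\int u^{p}$ satisfies $y'+cy^{1+\delta}\leqslant C'''$ and hence $\sup_{t}y\leqslant\max\{y(0),(C'''/c)^{1/(1+\delta)}\}$, i.e. (\ref{12}). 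Starting from $p=1$ (mass conservation) and iterating with increasing $p$ covers all $1\leqslant p<\infty$; at each step the norm $\|u\|_{mr}$ needed for the $w$-term lies strictly below the level already reached, so the bootstrap is self-consistent.

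Finally, (\ref{13})--(\ref{16}) follow from (\ref{12}) by elliptic regularity. Since $w=(\lambda_{2}-\Delta)^{-1}u^{m}$, the Bessel-potential estimates give $\|w\|_{\infty}\leqslant C\|u^{m}\|_{q}=C\|u\|_{mq}^{m}$ for $q>N/2$ and $\lambda_{2}\|w\|_{1}=\|u\|_{m}^{m}$, so interpolation yields (\ref{15}); the same computation with $f_{1}(u)\leqslant c_{1}u^{l}$ gives (\ref{13})--(\ref{14}), the threshold $p\geqslant\tfrac1l$ when $l<1$ appearing precisely because $\|u^{l}\|_{1/l}=\|u\|_{1}^{l}$ is the lowest norm of $u^{l}$ that mass conservation controls. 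For (\ref{16}) I would use the gradient Bessel-potential estimate with $L^{p}$ data for $p>N$, so that $\|\nabla v\|_{\infty}\leqslant C\|f_{1}(u)\|_{p}\leqslant C\|u\|_{lp}^{l}$ and $\|\nabla w\|_{\infty}\leqslant C\|u\|_{mp}^{m}$, both finite by (\ref{12}). The main obstacle throughout is the wrong-sign term $\int u^{p}w$ arising from the zeroth-order part $\lambda_{2}w$ of the repulsive potential, compounded by the fact that on $\mathbb{R}^{N}$ the naive pointwise absorptions produce non-integrable constants; both are resolved by the elliptic gain for $w$ and by systematically interpolating against the conserved mass.
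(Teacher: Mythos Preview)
Your proposal is correct and follows essentially the same $L^{p}$-energy strategy as the paper: test with $u^{p-1}$, exploit the repulsive dissipation $-\xi_{2}\int u^{p+m}$ coming from $f_{2}(u)=u^{m}$, absorb the attractive term via $l<m$, and control the wrong-sign contribution $\int u^{p}w$ through Bessel-potential smoothing plus interpolation against the conserved mass. The one simplification in the paper's version is that it avoids your bootstrap entirely: instead of reducing $\|w\|_{(p+m)/m}$ to a lower norm $\|u\|_{mr}$ already secured at a previous stage, the paper interpolates that norm directly between $\|u_{0}\|_{1}$ and $\|u\|_{m+r}$, absorbs the resulting $\epsilon\|u\|_{m+r}^{m+r}$ back into the dissipation, and closes the \emph{linear} ODE $\tfrac{1}{r}\tfrac{d}{dt}\|u\|_{r}^{r}+\|u\|_{r}^{r}\leqslant C$ in a single step for every $r$.
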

\begin{proof}
For $r> max\{2,\frac{m}{N-1}\}$, we multiply the first equation of (1.1) with $u^{r-1}$ and integrate over $\mathbb{R}^{N}$. Then
    \begin{align*}
        \frac{1}{r} \frac{d}{dt} \Vert u \Vert _{r}^{r}&=-(r-1)\int_{\mathbb{R}^{N}} u^{r-2}\vert \nabla u\vert^{2} dx+\xi_{1}(r-1)\int_{\mathbb{R}^{N}} u^{r-1}\nabla u \nabla v dx \\&-\xi_{2} (r-1)\int_{\mathbb{R}^{N}} u^{r-1}\nabla u \nabla w dx.
    \end{align*}
    We also multiply the second and third equation of (1.1) with $u^{r}$ and integrate by part over $\mathbb{R}^{N}$. Then we have
    \begin{equation*}
        \int_{\mathbb{R}^{N}} u^{r-1} \nabla v \nabla u dx =-\frac{\lambda_{1}}{r} \int_{\mathbb{R}^{N}} vu^{r}dx +\frac{1}{r}\int_{\mathbb{R}^{N}} f_{1}(u)u^{r} dx
    \end{equation*}
    and
    \begin{equation*}
        \int_{\mathbb{R}^{N}} u^{r-1} \nabla w \nabla u dx =-\frac{\lambda_{2}}{r} \int_{\mathbb{R}^{N}} wu^{r}dx +\frac{1}{r}\int_{\mathbb{R}^{N}} f_{2}(u)u^{r} dx.
    \end{equation*}
    Thus
    \begin{equation}\label{6}
    \begin{aligned}
        \frac{1}{r}\frac{d}{dt} \Vert u \Vert _{r}^{r}&=-\frac{4(r-1)}{r^{2}} \int_{\mathbb{R}^{N}} \vert \nabla u^{\frac{r}{2}} \vert ^{2} dx
         \\
        &-\frac{\xi_{1}(r-1)\lambda_{1}}{r} \int_{\mathbb{R}^{N}} vu^{r}dx +\frac{\xi_{1}(r-1)}{r} \int_{\mathbb{R}^{N}}f_{1}(u)u^{r} dx
        \\
        &+\frac{\xi_{2}(r-1)\lambda_{2}}{r}\int_{\mathbb{R}^{N}} wu^{r}dx-\frac{\xi_{2}(r-1)}{r} \int_{\mathbb{R}^{N}}f_{2}(u)u^{r} dx.
    \end{aligned}
    \end{equation}
    Using \eqref{6} and nonnegativity of the solutions, we obtain
    \begin{equation}\label{8}
    \begin{aligned}
        \frac{1}{r}\frac{d}{dt} \Vert u \Vert _{r}^{r}&\leqslant-\frac{4(r-1)}{r^{2}} \int_{\mathbb{R}^{N}} \vert \nabla u^{\frac{r}{2}} \vert ^{2} dx
         +\frac{\xi_{1}c_{1}(r-1)}{r} \int_{\mathbb{R}^{N}}u^{l+r} dx  \\
        &+\frac{\xi_{2}(r-1)\lambda_{2}}{r}\int_{\mathbb{R}^{N}} wu^{r}dx-\frac{\xi_{2}(r-1)}{r} \int_{\mathbb{R}^{N}}u^{m+r} dx.
    \end{aligned}
    \end{equation}
    By using the interpolation inequality, conservation law and Young's inequality, we have
    \begin{equation}\label{9}
        \Vert u \Vert_{l+r}^{l+r} \leqslant \Vert u_{0} \Vert_{1}^{(l+r)(1-\theta_{1})}\Vert u \Vert_{m+r}^{\theta_{1}(l+r)} \leqslant C(\Vert u_{0} \Vert_{1},l,r,m) +\epsilon_{1} \Vert u \Vert_{m+r}^{m+r}.
    \end{equation}
    where $0<\theta_{1} < 1$.

    We also have following inequality by  Young's inequality,
    \begin{equation}\label{10}
        \int_{\mathbb{R}^{N}} wu^{r} dx \leqslant \epsilon_{2}\Vert u \Vert_{m+r}^{m+r}+c(\epsilon_{2})\Vert w \Vert_{\frac{m+r}{m}}^{\frac{m+r}{m}}.
    \end{equation}
    Note that $ w= \Gamma_{\lambda_{2}}\ast u^{m}$ where $ \Gamma_{\lambda_{2}}$ is the Bessel potential.
    Thus by the $L_{p}-L_{q}$ convolution inequality, we get that
    \begin{equation*}
        c(\epsilon_{2})\Vert w \Vert_{\frac{m+r}{m}}^{\frac{m+r}{m}} \leqslant c(\epsilon_{2})\Vert \Gamma_{\lambda_{2}}\Vert_{\frac{N}{N-1}}^{(1-\theta_{2})\frac{m+r}{m}}\Vert u^{m}\Vert_{q}^{\theta_{2}\frac{m+r}{m}},
    \end{equation*}
    where $q=\frac{N(m+r)}{mN+m+r},0 <\theta_{2}<1$. Moreover
    \begin{equation*}
        \Vert u^{m} \Vert_{q}^{\frac{m+r}{m}}=\Vert u \Vert_{qm}^{m+r} \leqslant \Vert u_{0} \Vert_{1}^{(1-\theta_{3})(m+r)}\Vert u \Vert_{m+r}^{\theta_{3}(m+r)},
    \end{equation*}
    where $0 <\theta_{3}<1$. Using above inequalities and Young's inequality, we have
    \begin{equation}\label{11}
    \begin{aligned}
        c(\epsilon_{2})\Vert w \Vert_{\frac{m+r}{m}}^{\frac{m+r}{m}}& \leqslant c(\epsilon_{2})\Vert \Gamma_{\lambda_{2}}\Vert_{\frac{N}{N-1}}^{(1-\theta_{2})\frac{m+r}{m}}\Vert u_{0} \Vert_{1}^{\theta_{2}(1-\theta_{3})(m+r)}\Vert u \Vert_{m+r}^{\theta_{2}\theta_{3}(m+r)} \\
        &\leqslant \epsilon_{3} \Vert u \Vert_{m+r}^{m+r}+c(\epsilon_{2},\epsilon_{3},\Vert u_{0}\Vert_{1},m,N,\lambda_{2},r).
    \end{aligned}
    \end{equation}
    Thus from \eqref{10} and \eqref{11} we have
    \begin{equation}\label{7}
         \int_{\mathbb{R}^{N}} wu^{r} dx \leqslant \epsilon_{2}\Vert u \Vert_{m+r}^{m+r}+ \epsilon_{3} \Vert u \Vert_{m+r}^{m+r}+c(\epsilon_{2},\epsilon_{3},\Vert u_{0}\Vert_{1},m,N,\lambda_{2},r).
    \end{equation}
    Inserting \eqref{9} and \eqref{7} into \eqref{8}, and then choosing $\epsilon_{1},\epsilon_{2},\epsilon_{3}$  small enough,  we obtain
    \begin{equation}\label{318}
    \begin{aligned}
        &\frac{1}{r}\frac{d}{dt}\Vert u \Vert_{r}^{r} \\ &\leqslant -\frac{4(r-1)}{r^{2}} \int_{\mathbb{R}^{N}} \vert \nabla u^{\frac{r}{2}} \vert ^{2} dx \\& \quad + \left(\frac{\xi_{2}\lambda_{2}(r-1)}{r}(\epsilon_{2}+\epsilon_{3})+\epsilon_{1}c_{1}\frac{\xi_{1}(r-1)}{r} -\frac{\xi_{2}(r-1)}{r}\right)\Vert u \Vert_{m+r}^{m+r} \\ & \quad +C(\Vert u_{0}\Vert_{1},l,m,N,\xi_{1},\xi_{2},c_{1},r)
       \\
        & \leqslant -\frac{4(r-1)}{r^{2}} \int_{\mathbb{R}^{N}} \vert \nabla u^{\frac{r}{2}} \vert^{2} dx -C(\lambda_{2},\xi_{1},\xi_{2},N,l,m,c_{1},r)\Vert u \Vert_{m+r}^{m+r} \\& \quad + C(\Vert u_{0}\Vert_{1},l,m,N,\xi_{1},\xi_{2},c_{1},r).
    \end{aligned}
    \end{equation}
    Again using the interpolation inequality and Young's inequality, we have
    \begin{equation}\label{319}
        \Vert u \Vert_{r}^{r} \leqslant \Vert u_{0} \Vert_{1}^{r\theta_{4}}\Vert u \Vert_{m+r}^{(1-\theta_{4})r} \leqslant \epsilon_{4} \Vert u \Vert_{m+r}^{m+r} +C(r,\Vert u_{0} \Vert_{1}),
    \end{equation}
    where $0<\theta_{4}<1$. Adding \eqref{318} to \eqref{319}, we obtain
    \begin{equation*}
        \frac{1}{r}\frac{d}{dt}\Vert u \Vert_{r}^{r} +\Vert u \Vert_{r}^{r} \leqslant C(\Vert u_{0}\Vert_{1},l,m,N,\xi_{1},\xi_{2},c_{1},r),
    \end{equation*}
    for sufficiently small $\epsilon_{4}>0$. By the comparison principle of ordinary differential equations, we have
    \begin{equation*}
        \Vert u \Vert_{p} \leqslant  C(\Vert u_{0}\Vert_{1},l,m,N,\xi_{1},\xi_{2},c_{1},p),
    \end{equation*}
    where $1\leqslant p <\infty$. Thus we have \eqref{12}.

    If $l<1$ , $q \geqslant \frac{1}{l}$, then
    \begin{equation}\label{26}
    \begin{aligned}
        \Vert v \Vert_{q} = \Vert (\lambda_{1}I-\Delta)^{-1}&f_{1}(u)\Vert_{q} = \Vert \int_{0}^{\infty} e^{-\lambda_{1}t} G(\cdot,t)\ast f_{1}(u) dt\Vert_{q} \\
        &\leqslant C(N,p,q)\Vert f_{1}(u) \Vert_{p}\int_{0}^{\infty} e^{-\lambda_{1}t} t^{-\frac{N}{2}(\frac{1}{p}-\frac{1}{q})} dt.
    \end{aligned}
    \end{equation}
    Taking $p \geqslant \frac{1}{l}$ and $ 0 \leqslant \frac{1}{p}-\frac{1}{q} < \frac{2}{N}$, we have
    \begin{equation*}
        \Vert v \Vert_{q} \leqslant C(c_{1},l,N,\lambda_{1}), \quad \text{for} \ \frac{1}{l} \leqslant q \leqslant \infty.
    \end{equation*}
    For $ l \geqslant 1 $, we have
    \begin{multline}\label{17}
        \Vert v \Vert_{q} = \Vert (\lambda_{1}I-\Delta)^{-1}f_{1}(u)\Vert_{q}  = \Vert \Gamma_{\lambda_{1}} \ast f_{1}(u) \Vert_{q}
          \\
          \leqslant \Vert \Gamma_{\lambda_{1}} \Vert_{\frac{N}{N-1}} \Vert f_{1}(u) \Vert_{\frac{qN}{q+N}}
          \leqslant C(c_{1},\lambda_{1},N,q,l,\Vert u_{0} \Vert_{1}),
    \end{multline}
    for $1\leqslant q \leqslant \infty$. By the same argument as \eqref{17}, we obtain
    \begin{equation*}
        \Vert w \Vert_{q} \leqslant C(\lambda_{2},N,q,m,\Vert u_{0} \Vert_{1}),
    \end{equation*}
    for $1 \leqslant q \leqslant \infty.$ Thus we have \eqref{13},\eqref{14} and \eqref{15}.

    On the other hand, when $\frac{2}{N} < l < 1$,
    \begin{equation}\label{18}
    \begin{aligned}
        \Vert \nabla v \Vert_{\infty} &= \Vert \nabla (\lambda_{1}-\Delta)^{-1}f_{1}(u)\Vert_{\infty}
        \\
        &\leqslant \int_{0}^{\infty} e^{-\lambda_{1}t}\Vert \nabla G(\cdot,t) \ast f_{1}(u) \Vert_{\infty} dt \\
        &\leqslant C(p,N) \int_{0}^{\infty} e^{-\lambda_{1}t} t^{-\frac{1}{2}-\frac{N}{2}\frac{1}{p}}\Vert f_{1}(u)\Vert_{p} dt\\
        &\leqslant C(c_{1},p,N,l,\lambda_{1})\Gamma(\frac{1}{2}-\frac{N}{2p}),
    \end{aligned}
    \end{equation}
    where $p$ is sufficiently large. Similar to \eqref{18}, when $l\geqslant 1, m \geqslant 1$, we have
    \begin{equation*}
        \Vert \nabla v \Vert_{\infty} \leqslant  C(c_{1},\lambda_{1},l,N,\Vert u_{0} \Vert_{1})\quad \text{and}\quad \Vert \nabla w \Vert_{\infty} \leqslant C(\lambda_{2},m,N,\Vert u_{0} \Vert_{1}).
    \end{equation*}
    Thus we obtain \eqref{16}.

\end{proof}

\begin{lemma}\label{lem10}
        For $N \geqslant 2 $,  if $0\leqslant f_{1}(s) \leqslant c_{1}s^{l},0\leqslant f_{2}(s) \leqslant c_{2}s^{m}\ \text{for} \ l=m <\frac{2}{N}$, then it holds that
    \begin{align}
        &\Vert u \Vert_{p}\leqslant C_{7}, \quad \text{for} \ 1\leqslant p < \infty, \label{19} \\
        &\Vert (v,w) \Vert_{p} \leqslant C_{8}, \quad \text{for} \ \frac{1}{l}=\frac{1}{m} \leqslant p \leqslant \infty,\label{20}\\
        &\Vert \nabla v \Vert_{\infty}  \leqslant C_{9}, \quad \Vert \nabla w \Vert_{\infty} \leqslant C_{10},\label{21}
    \end{align}
    where positive constants \ $C_{7},C_{8},C_{9},C_{10}$ depend on the $l,m,\lambda_{1},\lambda_{2},\Vert u_{0} \Vert_{1},\xi_{1},\xi_{2},c_{1},c_{2},N$.
\end{lemma}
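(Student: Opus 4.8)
The plan is to mirror the testing argument of Lemma~\ref{lem9}, the essential new feature being that here the repulsion no longer furnishes a sign-definite absorbing term $-\int_{\mathbb{R}^{N}}u^{m+r}\,dx$, since we are given only the upper bound $f_{2}(s)\leqslant c_{2}s^{m}$. Instead, the subcriticality $l=m<\frac{2}{N}$ is what lets every nonlinear contribution be absorbed by the gradient dissipation. First I would fix $r>\max\{2,\tfrac{m}{N-1}\}$, multiply the first equation of (1.1) by $u^{r-1}$, integrate, and rewrite the cross terms via the second and third equations exactly as in \eqref{6}, to obtain
\begin{align*}
    \frac{1}{r}\frac{d}{dt}\Vert u\Vert_{r}^{r}
    &=-\frac{4(r-1)}{r^{2}}\int_{\mathbb{R}^{N}}|\nabla u^{\frac{r}{2}}|^{2}\,dx
    -\frac{\xi_{1}(r-1)\lambda_{1}}{r}\int_{\mathbb{R}^{N}}vu^{r}\,dx
    +\frac{\xi_{1}(r-1)}{r}\int_{\mathbb{R}^{N}}f_{1}(u)u^{r}\,dx\\
    &\quad+\frac{\xi_{2}(r-1)\lambda_{2}}{r}\int_{\mathbb{R}^{N}}wu^{r}\,dx
    -\frac{\xi_{2}(r-1)}{r}\int_{\mathbb{R}^{N}}f_{2}(u)u^{r}\,dx.
\end{align*}
By nonnegativity of $u,v$ and of $f_{2}$ I discard the two good-sign terms $-\frac{\xi_{1}(r-1)\lambda_{1}}{r}\int vu^{r}$ and $-\frac{\xi_{2}(r-1)}{r}\int f_{2}(u)u^{r}$, and use $\int f_{1}(u)u^{r}\leqslant c_{1}\int u^{l+r}$, which leaves
\[
    \frac{1}{r}\frac{d}{dt}\Vert u\Vert_{r}^{r}
    \leqslant -\frac{4(r-1)}{r^{2}}\Vert\nabla u^{\frac{r}{2}}\Vert_{2}^{2}
    +\frac{\xi_{1}c_{1}(r-1)}{r}\Vert u\Vert_{l+r}^{l+r}
    +\frac{\xi_{2}\lambda_{2}(r-1)}{r}\int_{\mathbb{R}^{N}}wu^{r}\,dx.
\]

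The heart of the argument is controlling the two positive terms by the dissipation $\Vert\nabla u^{\frac{r}{2}}\Vert_{2}^{2}$. Writing $\Vert u\Vert_{l+r}^{l+r}=\Vert u^{\frac{r}{2}}\Vert_{2(l+r)/r}^{2(l+r)/r}$, I would apply the Gagliardo--Nirenberg inequality of Lemma~\ref{lem11} (choosing its parameters so the gradient factor reads $\Vert\nabla u^{\frac{r}{2}}\Vert_{2}$) together with the conservation law $\Vert u\Vert_{1}=\Vert u_{0}\Vert_{1}$; because $l<\frac{2}{N}$, the resulting interpolation exponent puts a power strictly below $2$ on $\Vert\nabla u^{\frac{r}{2}}\Vert_{2}$, so Young's inequality absorbs this term into the dissipation up to a constant. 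For $\int wu^{r}$ I would first split $\int wu^{r}\leqslant\epsilon\Vert u\Vert_{l+r}^{l+r}+c(\epsilon)\Vert w\Vert_{(l+r)/l}^{(l+r)/l}$ by Young, then estimate $\Vert w\Vert_{(l+r)/l}$ through the $L^{p}$--$L^{q}$ convolution inequality using $w=\Gamma_{\lambda_{2}}\ast f_{2}(u)$ with $\Gamma_{\lambda_{2}}\in L^{\frac{N}{N-1}}$ and $f_{2}(u)\leqslant c_{2}u^{m}=c_{2}u^{l}$, exactly along the lines of \eqref{10}--\eqref{7}; this reduces $\Vert w\Vert_{(l+r)/l}$ to a power of $\Vert u\Vert_{l+r}$, and a second use of Lemma~\ref{lem11} with mass conservation again produces a gradient power below $2$, absorbable by Young. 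Taking all $\epsilon$'s small and adding the interpolation $\Vert u\Vert_{r}^{r}\leqslant\epsilon\Vert u\Vert_{l+r}^{l+r}+C$, I arrive at
\[
    \frac{1}{r}\frac{d}{dt}\Vert u\Vert_{r}^{r}+\Vert u\Vert_{r}^{r}\leqslant C(\Vert u_{0}\Vert_{1},l,m,N,\xi_{1},\xi_{2},\lambda_{2},c_{1},c_{2},r),
\]
and the ODE comparison principle gives \eqref{19} for every finite $p$.

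For \eqref{20} and \eqref{21} I would bootstrap from the $L^{p}$-bounds just obtained exactly as in the second half of Lemma~\ref{lem9}: representing $v=(\lambda_{1}I-\Delta)^{-1}f_{1}(u)$ and $w=(\lambda_{2}I-\Delta)^{-1}f_{2}(u)$ through the Bessel potential, the smoothing estimates \eqref{26} and \eqref{17} (choosing $p\geqslant\frac{1}{l}$ and $0\leqslant\frac{1}{p}-\frac{1}{q}<\frac{2}{N}$) yield $\Vert(v,w)\Vert_{q}\leqslant C_{8}$ for $\frac{1}{l}=\frac{1}{m}\leqslant q\leqslant\infty$, while the gradient bound \eqref{18} with $p$ large gives $\Vert\nabla v\Vert_{\infty},\Vert\nabla w\Vert_{\infty}\leqslant C$, which is \eqref{21}.

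The step I expect to be the main obstacle is the absorption carried out in the second paragraph: with no sign-definite term available, the entire burden of controlling both $\Vert u\Vert_{l+r}^{l+r}$ and $\int wu^{r}$ rests on the single dissipation $\Vert\nabla u^{\frac{r}{2}}\Vert_{2}^{2}$. One must check carefully that the Gagliardo--Nirenberg exponents of Lemma~\ref{lem11} meet its admissibility constraints while simultaneously yielding gradient powers strictly below $2$ for both terms; this is precisely where $l=m<\frac{2}{N}$ is indispensable, and the convolution/interpolation chain for $\int wu^{r}$ must be verified to close with a constant depending only on the admissible parameters.
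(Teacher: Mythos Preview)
Your overall strategy matches the paper's proof: derive \eqref{6}, drop the two good-sign terms, reduce everything to $\Vert u\Vert_{l+r}^{l+r}$, and absorb that into $\Vert\nabla u^{r/2}\Vert_{2}^{2}$ via Lemma~\ref{lem11} using $l<\tfrac{2}{N}$. The bootstrap to \eqref{20}--\eqref{21} is also the same.

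There is, however, one point where your route diverges from the paper's and would not close as written. You propose to bound $\Vert w\Vert_{(l+r)/l}$ ``exactly along the lines of \eqref{10}--\eqref{7}'', i.e.\ via Young's convolution inequality with $\Gamma_{\lambda_{2}}\in L^{N/(N-1)}$. That forces $\Vert u^{l}\Vert_{q}$ with $q=\tfrac{N(l+r)}{lN+l+r}$, hence $\Vert u\Vert_{ql}$ with $ql=\tfrac{lN(l+r)}{lN+l+r}$. When $l\leqslant\tfrac{1}{N}$ (which the hypothesis $l<\tfrac{2}{N}$ certainly allows) one has $ql<1$ for every $r>0$, so the interpolation between $\Vert u\Vert_{1}$ and $\Vert u\Vert_{l+r}$ that you need next is unavailable on $\mathbb{R}^{N}$. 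The paper sidesteps this by using the trivial $L^{q}\to L^{q}$ bound for the resolvent, i.e.\ $\Vert\Gamma_{\lambda_{2}}\Vert_{1}<\infty$, which gives directly
\[
\Vert w\Vert_{(r+l)/l}\leqslant \int_{0}^{\infty}e^{-\lambda_{2}t}\Vert G(\cdot,t)\ast f_{2}(u)\Vert_{(r+l)/l}\,dt\leqslant \frac{c_{2}}{\lambda_{2}}\Vert u\Vert_{l+r}^{l},
\]
so that $\Vert w\Vert_{(r+l)/l}^{(r+l)/l}\leqslant C\Vert u\Vert_{l+r}^{l+r}$, and only \emph{one} application of Lemma~\ref{lem11} is needed. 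With this modification your argument goes through; the rest of your proposal is correct.
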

\begin{proof}
    By \eqref{6} and  Young's inequality, we have
    \begin{equation}\label{24}
    \begin{aligned}
        \frac{1}{r}\frac{d}{dt}\Vert u \Vert_{r}^{r}&\leqslant -\frac{4(r-1)}{r^{2}}\int_{\mathbb{R}^{N}} \vert \nabla u^{\frac{r}{2}} \vert^{2} dx \\
        & \quad +\frac{c_{1}\xi_{1}(r-1)}{r}\Vert u \Vert_{l+r}^{l+r}+\frac{\xi_{2}\lambda_{2}(r-1)}{r}\int_{\mathbb{R}^{N}} wu^{r} dx\\
        &\leqslant -\frac{4(r-1)}{r^{2}}\int_{\mathbb{R}^{N}} \vert \nabla u^{\frac{r}{2}} \vert^{2} dx +\frac{\xi_{2}\lambda_{2}(r-1)}{r}\frac{l}{l+r}\Vert w \Vert_{\frac{r+l}{l}}^{\frac{r+l}{l}}\\
        & \quad +\left( \frac{\xi_{1}c_{1}(r-1)}{r}+\frac{\xi_{2}\lambda_{2}(r-1)}{r}\frac{r}{r+l}\right)\Vert u \Vert_{l+r}^{l+r}.
    \end{aligned}
    \end{equation}

    We now estimate the term $\Vert w \Vert_{\frac{r+l}{l}}$.
    \begin{align*}
        \Vert w \Vert_{\frac{r+l}{l}}&= \Vert (\lambda_{2}I- \Delta)^{-1} f_{2}(u)\Vert_{\frac{r+l}{l}} \leqslant \int_{0}^{\infty} e^{-\lambda_{2}t}\Vert G(\cdot,t) \ast f_{2}(u) \Vert_{\frac{r+l}{l}}dt  \\
        &\leqslant\int_{0}^{\infty} e^{-\lambda_{2}t} \Vert f_{2}(u)\Vert_{\frac{r+l}{l}} dt \leqslant \lambda_{2}c_{2} \left(\int_{\mathbb{R}^{N}} u^{r+l} dx\right)^{\frac{l}{r+l}}
    \end{align*}
    Then we obtain
    \begin{equation*}
        \Vert w \Vert_{\frac{r+l}{l}}^{\frac{r+l}{l}} \leqslant \lambda_{2}c_{2}\Vert u \Vert_{l+r}^{l+r}.
    \end{equation*}
    Next, we estimate the term $\Vert u \Vert_{l+r}^{l+r}$. Using the Lemma \ref{lem11} with $a=2+\frac{2}{N}$, we obtain
    \begin{equation*}
        \Vert u \Vert_{l+r}^{l+r}  \leqslant C(N,l,r)\Vert \nabla u^{\frac{r}{2}}\Vert_{2}^{\frac{2\theta_{4}(l+r)}{r}}\Vert u_{0} \Vert_{1}^{(1-\theta_{4})(l+r)},
    \end{equation*}
    where $\theta_{4}=\frac{r}{2}(1-\frac{1}{l+r})(\frac{1}{N}-\frac{1}{2}+\frac{r}{2})^{-1}$. Since $l<\frac{2}{N}$, we easily check  $\frac{\theta_{4}(l+r)}{r}<1$. Also by Young's inequality, we have
    \begin{equation}\label{22}
       \Vert w \Vert_{\frac{r+l}{l}}^{\frac{r+l}{l}} \leqslant \lambda_{2}c_{2} \Vert u \Vert_{l+r}^{l+r} \leqslant \epsilon_{5} \Vert \nabla u^{\frac{r}{2}}\Vert_{2}^{2} + C(\epsilon_{5},\Vert u_{0} \Vert_{1},\lambda_{2},c_{2},N,l,r).
    \end{equation}
    Similarly
    \begin{equation}\label{23}
        \Vert u \Vert_{r}^{r} \leqslant C(N,r) \Vert \nabla u ^{\frac{r}{2}} \Vert_{2}^{2\theta_{5}}\Vert u_{0}\Vert_{1}^{2(1-\theta_{5})} \leqslant \epsilon_{6}\Vert \nabla u^{\frac{r}{2}}\Vert_{2}^{2} +c(\epsilon_{6},\Vert u _{0}\Vert_{1},r),
    \end{equation}
    where $0< \theta_{5} <1$. Inserting \eqref{22} into \eqref{24}, and then adding \eqref{23}, we obtain
    \begin{align*}
        \frac{1}{r}\frac{d}{dt}&\Vert u \Vert_{r}^{r} + \Vert u \Vert_{r}^{r}\\ &\leqslant   \epsilon_{5} \left(\frac{\xi_{1}c_{1}(r-1)}{\lambda_{2}c_{2}r}+\frac{\xi_{2} \lambda_{2}r(r-1)}{\lambda_{2}c_{2}r(r+l)}+ \frac{l\xi_{2}\lambda_{2}(r-1)}{r(r+l)}\right)\int_{\mathbb{R}^{N}} \vert \nabla u^{\frac{r}{2}}\vert^{2} dx \\  &\quad + \epsilon_{6}\int_{\mathbb{R}^{N}} \vert \nabla u^{\frac{r}{2}}\vert^{2} dx-\frac{4(r-1)}{r^{2}}\int_{\mathbb{R}^{N}} \vert \nabla u^{\frac{r}{2}}\vert^{2} dx +c_{6}.
    \end{align*}
    For sufficiently small $\epsilon_{5},\epsilon_{6}>0$, we see that
    \begin{equation*}
        \frac{1}{r}\frac{d}{dt}\Vert u \Vert_{r}^{r}+\Vert u \Vert_{r}^{r} \leqslant c_{7},
    \end{equation*}
    where $c_{7}$ depends on $l,r,\xi_{1},\xi_{2},N, \lambda_{2},c_{1},c_{2},\Vert u_{0}\Vert_{1}$. Thanks to the comparison principle of the ordinary differential equations, we conclude that
    \begin{equation*}
        \Vert u \Vert_{r} \leqslant c_{8}, \quad 1\leqslant r < \infty,
    \end{equation*}
    where $c_{8}$ depends on $l,r,\xi_{1},\xi_{2},N, \lambda_{2},c_{1},c_{2},\Vert u_{0}\Vert_{1}$. Thus we have \eqref{19}. The same arguments as of \eqref{26} and \eqref{18}, we have \eqref{20} and \eqref{21}.

\end{proof}

\textbf{\emph{Proof of Theorem1.1}}.

~\\
    Using the semigroup property, Lemma \ref{lem9} and \ref{lem10}, we have
    \begin{equation}\label{28}
    \begin{aligned}
        \Vert u \Vert_{\infty}& \leqslant \Vert T(t)u_{0}\Vert_{\infty} +\int_{0}^{t}\Vert T(t-s)\nabla \cdot ( \xi_{1}u(s)\nabla(\Delta-\lambda_{1})^{-1}f_{1}(u(s)))\Vert_{\infty} ds \\
        &\quad+\int_{0}^{t}\Vert T(t-s) \nabla \cdot (\xi_{2}u(s)\nabla(\Delta-\lambda_{2})^{-1}f_{2}(u(s)))\Vert_{\infty} ds \\ & \quad + \int_{0}^{t}\Vert T(t-s)u(s)\Vert_{\infty} ds \\
        &\leqslant e^{-t}\Vert u_{0} \Vert_{\infty} + \xi_{1}\sup_{0<t<T_{max}}(\Vert \nabla v \Vert_{\infty}\Vert u \Vert_{2N})\int_{0}^{t} e^{-(t-s)}(t-s)^{-\frac{3}{4}}ds \\
        &\quad+ \xi_{2}\sup_{0<t<T_{max}}(\Vert \nabla w \Vert_{\infty}\Vert u \Vert_{2N})\int_{0}^{t} e^{-(t-s)}(t-s)^{-\frac{3}{4}}ds \\& \quad +c\sup_{0<t<T_{max}}\Vert u \Vert_{N}\int_{0}^{t} e^{-(t-s)}(t-s)^{-\frac{1}{2}} ds \\
        & \leqslant C( \lambda_{1},\lambda_{2},l,m,N,\xi_{1},\xi_{2},\Vert u_{0}\Vert_{1} ,\Vert u_{0} \Vert_{\infty}).
    \end{aligned}
    \end{equation}
    If $T_{max} < \infty$, then $\Vert u \Vert_{\infty}$ must blow up for finite time. But it is contradiction to \eqref{28}. Therefore the proof of Theorem 1.1 is completed.
\\

\begin{remark}
   By Moser's iteration technique introduced in \cite{moser}, we can also obtain the boundedness of $\Vert u \Vert _{\infty}$  (see
    \cite{regular2,nagai1}).
\end{remark}

\section*{Acknowledgments}

This research was supported by Basic Science Research Program through the National Research Foundation of Korea(NRF) funded by the Ministry of Education (2022R1I1A3055309).

\end{document}